\documentclass[11pt]{amsart}
\baselineskip=7.0mm
\usepackage{amsmath}
\usepackage{bbding}
\usepackage{tikz}

\setlength{\baselineskip}{1.09\baselineskip}

\theoremstyle{plain}
\newtheorem{theorem}{Theorem}[section]
\newtheorem{lemma}[theorem]{Lemma}
\newtheorem{prop}[theorem]{Proposition}

\theoremstyle{definition}
\newtheorem{remark}[theorem]{Remark}

\newtheorem{example}[theorem]{Example}
\numberwithin{equation}{section}

\setlength{\textwidth}{5.8in} \setlength{\textheight}{8.0in}
\hoffset=-0.45truein \voffset=0.1truein

\def\be{\begin{equation}}
\def\ee{\end{equation}}

\begin{document}

\title[On The Negativity of Ricci Curvatures]
{On The Negativity of Ricci Curvatures of\\ Complete Conformal Metrics}
\author{Qing Han$^1$, \, Weiming Shen$^2$ }

\begin{abstract}
A version of the singular Yamabe problem in bounded domains
yields complete conformal metrics
with negative constant scalar curvatures. In this paper,
we study whether these metrics have negative Ricci curvatures. Affirmatively, we prove that
these metrics indeed have negative Ricci curvatures in bounded convex domains in the
Euclidean space.
On the other hand, we provide
a general construction of domains in compact manifolds and demonstrate that
the negativity of Ricci curvatures does not hold if the boundary is close to certain sets
of low dimension. The expansion of the Green's function and the positive mass theorem play
essential roles in certain cases. \end{abstract}
\footnotetext[1]{Qing Han

qhan@nd.edu}
\footnotetext[2]{Weiming Shen \Envelope

wmshen@pku.edu.cn}
\footnotetext[1]{Department of Mathematics,
University of Notre Dame, Notre Dame, IN 46556, USA}
\footnotetext[2]{School of Mathematical Sciences,
Capital Normal University, Beijing, 100048, China}

\thanks{The first author acknowledges the support of NSF Grant DMS-1404596.
The second author acknowledges the support of NSFC Grant 11571019.}
\maketitle

\section{Introduction}\label{sec-Intro}

Let $(M,g)$ be a compact Riemannian manifold of dimension $n$ without boundary, for $n\ge 3$,
and $\Gamma$ be a smooth submanifold in $M$.
For $(M,g)=(S^n, g_{S^n})$,
Loewner and Nirenberg \cite{Loewner&Nirenberg1974} proved that there exists a
complete conformal metric on $S^n\setminus \Gamma$  with a {\it negative}
constant scalar curvature
if and only if dim$(\Gamma)>(n-2)/2$.
Aviles and McOwen \cite{AM1988DUKE} proved the similar result for the general manifold $(M,g)$.
As a consequence, we can take the dimension of the submanifold to be $n-1$
and conclude the following result: In any compact Riemannian manifold with boundary,
there exists a complete conformal metric with a negative constant scalar curvature. See \cite{AM1988DUKE}.
For convenience, we always take the constant scalar curvature to be $-n(n-1)$.
In this paper, we will study 
whether Ricci curvatures of such a metric remain negative.

For the case of positive scalar curvatures, the
existence and asymptotic behaviors of solutions have been extensively studied over the years.
We shall not discuss this case here, but refer to
\cite{Caffarelli1989}, \cite{Korevaar1999}, \cite{Mazzeo Pacard1999},
\cite{Mazzeo PollackUhlenbeck1996}, \cite{Schoen1988}, \cite{SchoenYau1988}.

There
are several classical results for metrics with negative Ricci curvatures.
Gao and Yau \cite{GaoYau} proved that
there exists a metric of negative Ricci curvature on every compact 3-dimensional
manifold without boundary.
Lohkamp \cite{Lohkamp5} generalized this to arbitrary dimensions and proved
that any manifold of dimension $n\geq 3$ (compact or not)
admits a complete metric of negative Ricci curvature.
Restricted to conformal metrics, by solving $\det(Ric)=constant$ with
a precise boundary asymptotics, Guan \cite{Guan2008} and Gursky, Streets and Warren \cite{M.Gursky1}
proved that there exists a complete
conformal metric
with negative Ricci curvature on a compact Riemannian manifold with boundary.

In the unit ball in the Euclidean space, the complete conformal metric with scalar
curvature $-n(n-1)$ is exactly the Poincar\'e metric of the unit ball model of the hyperbolic space
and has sectional curvatures $-1$ and Ricci curvatures $-(n-1)$. In particular, it has
negative sectional curvatures and Ricci curvatures.
A natural question is whether this remains true for the more general case; namely, whether
the complete conformal metric with a negative constant scalar curvature
in a compact Riemannian manifold with boundary has  negative sectional curvatures
or negative Ricci curvatures.
We point out that a straightforward calculation based on the polyhomogeneous expansion
established in \cite{ACF1982CMP} and \cite{Mazzeo1991} yields
that such a metric has sectional curvatures asymptotically equal to $-1$  near  boundary.
Our main concern is whether the negativity of the sectional curvatures or Ricci curvatures near boundary can
be carried over to the entire domain.

In view of the Poincar\'e metric in the unit ball model of the hyperbolic space, it is
reasonable to expect that the complete conformal metric with a negative constant scalar curvature
should have negative sectional curvatures in a domain close to the unit ball in the Euclidean space.
We will confirm this in this paper. In fact, we will prove an affirmative result for convex domains in
the Euclidean space.

\begin{theorem}\label{main reslut}
Let $\Omega\subset \mathbb{R}^{n}$ be a bounded convex domain, for $n\ge 3$,
and $g_\Omega$ be the complete conformal metric in $\Omega$
with the constant scalar curvature $-n(n-1)$.
Then, $g_\Omega$ has negative sectional curvatures in $\Omega$.
Moreover, $g_\Omega$ has Ricci curvatures  strictly less than $-{n}/{2}$ in $\Omega$.
\end{theorem}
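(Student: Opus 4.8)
The plan is to reduce the theorem to the strict concavity of $\sqrt u$, where $u$ is the conformal factor, and then to prove that concavity by a convexity maximum principle.

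Write the complete metric as $g_\Omega=u^{-2}\,|dx|^2$ with $u>0$ in $\Omega$ and $u=0$ on $\partial\Omega$; then $R_{g_\Omega}=-n(n-1)$ is equivalent to $u\,\Delta u=\frac n2(|\nabla u|^2-1)$ in $\Omega$, all operations Euclidean. The conformal change formulas for curvature give, for a Euclidean-orthonormal pair $X,Y$ spanning a $2$-plane $\Pi$ and a $g_\Omega$-unit vector $v=uw$ with $|w|=1$,
\[
K_{g_\Omega}(\Pi)=u\bigl(\nabla^2u(X,X)+\nabla^2u(Y,Y)\bigr)-|\nabla u|^2,\qquad
\mathrm{Ric}_{g_\Omega}(v,v)=(n-2)\Bigl(u\,\nabla^2u(w,w)-\tfrac12|\nabla u|^2\Bigr)-\tfrac n2 ,
\]
as one checks on the Poincar\'e ball. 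Let $M:=u\,\nabla^2u-\tfrac12\,\nabla u\otimes\nabla u$. Since $\nabla^2\sqrt u=\tfrac1{2u\sqrt u}\,M$, strict concavity of $\sqrt u$ means exactly that $M$ is negative definite; and if $M<0$ then, for every unit $w$, $u\,\nabla^2u(w,w)<\tfrac12\langle\nabla u,w\rangle^2\le\tfrac12|\nabla u|^2$, so $\mathrm{Ric}_{g_\Omega}(v,v)<-\tfrac n2$, while $u(\nabla^2u(X,X)+\nabla^2u(Y,Y))\le\tfrac12|\nabla u|^2$ gives $K_{g_\Omega}(\Pi)\le-\tfrac12|\nabla u|^2\le0$, with strict negativity everywhere since $M<0$ also at the critical points of $u$. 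Thus it suffices to show that $\sqrt u$ is strictly concave in every bounded convex $\Omega\subset\mathbb R^n$.

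I would prove this with the concavity maximum principle (Korevaar, Kennington, Alvarez--Lasry--Lions). The function $v:=\sqrt u$ solves $\Delta v=(n-1)|\nabla v|^2/v-\tfrac n4\,v^{-3}$, i.e. $\Delta v=f(v,\nabla v)$ with $f(s,p)=(n-1)|p|^2s^{-1}-\tfrac n4\,s^{-3}$. Form the concavity function $\mathcal C(x,y)=v\bigl(\tfrac{x+y}2\bigr)-\tfrac12v(x)-\tfrac12v(y)$ on $\overline\Omega\times\overline\Omega$ (meaningful because $\Omega$ is convex). If $\inf\mathcal C<0$, then---after the boundary analysis---the infimum is attained at an interior pair $(x_0,y_0)$ with $x_0\ne y_0$, where the first- and second-order conditions, pushed through the equation, must contradict the structural requirements of the principle: most importantly $s\mapsto f(s,p)$ is concave on the range of $v$, which amounts to $f_{ss}<0$, i.e. $|\nabla u|^2<\tfrac{6n}{n-1}$, guaranteed by the gradient bound $|\nabla u|\le1$ (a consequence of $u\le\mathrm{dist}(\cdot,\partial\Omega)$, obtained by comparing $u$ with half-space solutions), together with the quadratic, convex dependence on $p$. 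The boundary step is to establish $\liminf\mathcal C\ge0$ as $(x,y)\to\partial(\overline\Omega\times\overline\Omega)$; here one uses the polyhomogeneous expansion $u=d+a_2\,d^2+\cdots$ with $d=\mathrm{dist}(\cdot,\partial\Omega)$ and $a_2=\tfrac1{2(n-1)}\Delta d\le0$, the concavity of $d$ on a convex domain, and $u\le d$, which together give $\nabla^2\sqrt u=\tfrac1{2u\sqrt u}\bigl(d\,\nabla^2d-\tfrac12\nabla d\otimes\nabla d+O(d^2)\bigr)\le0$ near $\partial\Omega$ and control $v$ at a midpoint versus at the endpoints. Once $\sqrt u$ is concave, the constant rank theorem---or the elementary remark that a nonstrict degeneracy of $\nabla^2\sqrt u$ would force $\sqrt u$ to be affine along a line segment crossing the bounded set $\Omega$---upgrades it to strict concavity.

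The main obstacle is precisely this technical core. The function $v=\sqrt u$ is only H\"older continuous of exponent $\tfrac12$ up to $\partial\Omega$ (it behaves like $\sqrt{\mathrm{dist}}$), so the off-the-shelf versions of the concavity maximum principle, which require the relevant function to be at least continuous with a controlled modulus at the boundary, do not apply directly; the boundary analysis has to be carried out by hand using the sharp asymptotics of \cite{ACF1982CMP} and \cite{Mazzeo1991} and, crucially, the sign of the second fundamental form of $\partial\Omega$---this is the one place where convexity of $\Omega$ enters in an essential way. The resulting bound is sharp: in the limit of a long thin convex domain the Ricci curvature in the ``long'' direction tends to $-\tfrac n2$, matching the degenerate slab where $u$ is independent of one variable and $\sqrt u$ is concave but not strictly so. (Alternatively, one may use that $g_\Omega$ is conformally flat with constant scalar curvature, so its trace-free Schouten tensor $\mathring P$ satisfies $\Delta_{g_\Omega}\mathring P=-n\,\mathring P^2+n\,\mathring P+|\mathring P|^2g_\Omega$; a short computation shows an interior maximum of $\lambda_{\max}(\mathring P)$ with positive value forces $\lambda_{\max}(\mathring P)\ge\frac{n-1}{n-2}$, so---since $\mathring P\to0$ at $\partial\Omega$---the theorem follows from the a priori bound $\mathrm{Ric}_{g_\Omega}<0$, which is again where convexity must be used.)
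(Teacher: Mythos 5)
You have the right reduction: the curvature formulas agree with the paper's \eqref{Sectional cur in v}--\eqref{Ricci cur  in v} once you translate notation (your ``$u$'' is the paper's ``$v$''), and negative definiteness of $M=v\,\nabla^2v-\tfrac12\nabla v\otimes\nabla v$ indeed gives both conclusions. But the two steps that carry the real weight are left open, and one rests on a false claim. The concavity maximum principle applied to $\sqrt v$ is never actually closed: as you yourself concede, $\sqrt v\sim\sqrt d$ near $\partial\Omega$ is only H\"older--$\tfrac12$, so the boundary step of the Korevaar/Kennington/Alvarez--Lasry--Lions argument is precisely the missing technical core, not a routine verification, and the structure condition ``$f$ concave in $s$'' is not obviously the right hypothesis for the concavity function $\mathcal C$ to have nonnegative infimum. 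The paper avoids all of this by applying Kennington's Theorems~3.1 and~3.2 to $v$ itself, which is Lipschitz up to $\partial\Omega$ since $v=d+O(d^2)$, and gets the \emph{stronger} conclusion $\nabla^2v\le0$. The gradient bound is also obtained differently: the paper shows $\Delta v<0$ in $\Omega$ (maximum principle applied to $\Delta v$, using the inequality $v\Delta(\Delta v)+(2-n)\nabla v\cdot\nabla(\Delta v)\ge0$ together with the boundary value $\Delta v=-\tfrac{n}{n-1}H_{\partial\Omega}\le0$, which is where convexity enters) and then reads $|\nabla v|<1$ off \eqref{eq-transf}; your derivation from ``$v\le d$ by half-space comparison'' does not by itself yield a pointwise bound on $|\nabla v|$.

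The strictness step in your proposal is wrong as stated. The ``elementary remark'' that a degenerate direction of $\nabla^2\sqrt v$ at a single point forces $\sqrt v$ to be affine along a segment is false for general concave functions: $-x^4$ on an interval is concave with vanishing second derivative at the origin and is not affine on any segment. Falling back on a constant rank theorem is not free either; as the paper notes in the introduction, the Caffarelli--Guan--Ma theorem does not apply to this equation, and one would have to verify suitable hypotheses from scratch. The paper instead closes strictness by a short self-contained maximum-principle argument: if some $R_{ijij}(x_0)=0$, then concavity of $v$ and the curvature formula force $\nabla u(x_0)=0$ and $u_{ii}(x_0)=0$ (in the paper's $u$); differentiating \eqref{eq-MainEq} twice in $x_i$ and using the concavity inequality \eqref{v-concave} gives
\begin{equation*}
\Delta u_{ii}-\tfrac14(n+2)(n+4)\,u^{4/(n-2)}u_{ii}\le 0,\qquad u_{ii}\ge 0,
\end{equation*}
so the strong maximum principle forces $u_{ii}\equiv0$, hence $u$ is constant along a full chord of $\Omega$, contradicting $u\to\infty$ on $\partial\Omega$. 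You should replace your strictness claim (and the Schouten-tensor alternative, which is likewise only sketched) with an argument of this type.
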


The convexity assumption of the domain $\Omega$ is crucial. It allows us to
apply a convexity theorem by Kennington  \cite{Kennington1985} directly
to conformal factors. Theorem \ref{main reslut}
does not hold for general bounded domains in $\mathbb R^n$.
In fact, in certain bounded star-shaped domains,
the conformal metrics
may have arbitrarily large positive Ricci curvature components.
See Example \ref{exa-curves}.
Note that bounded convex domains and  bounded star-shaped domains have the
same topology.

Closely related to the negativity of the Ricci curvatures is whether there is a constant rank theorem for metrics
with negative Ricci curvatures, since it is already known that the Ricci curvatures are negative
near boundary. Caffarelli, Guan, and Ma \cite{Caffarelli2007}
proved a constant rank theorem for the $\sigma_k$-curvature equations under certain
positivity conditions on curvatures. However, their result is not applicable in our case.
Our strategy is to connect directly boundary curvatures of domains with the interior curvature tensors
of the complete conformal metrics.

\smallskip

We now turn our attention to bounded smooth domains which are sufficiently  ``far" from the unit ball.
According to Aviles and McOwen \cite{AM1988DUKE}, in order to have
a complete conformal metric with constant negative scalar curvature in $M\setminus \Gamma$,
it is required that dim$(\Gamma)>(n-2)/2$.
Closely related is a result proved by Mazzeo and Pacard \cite{Mazzeo Pacard1999}
that there exist complete conformal metrics in $S^n\setminus \Gamma$ with
constant {\it positive} scalar curvatures if dim$(\Gamma)\le (n-2)/2$. In view of these results,
we can ask what happens to Ricci curvatures of the complete conformal metrics with
scalar curvatures fixed at $-n(n-1)$ in domains
$\Omega\subset M$ whose $(n-1)$-dimensional
boundary is close to a smooth submanifold $\Gamma$ of dimension $\le (n-2)/2$.
Do Ricci curvatures have mixed signs as $\partial\Omega$ becomes close to a low
dimensional set, say a single point?


In this paper, we will construct domains where complete conformal metrics have large positive Ricci curvatures
in domains in compact Riemannian manifolds.

\begin{theorem}\label{thrm-large-positive-Ricci}
Let $(M, g)$ be a compact Riemannian manifold  of dimension $n\geq 3$ without boundary and
$\Gamma$ be a disjoint union of finitely many closed smooth embedded submanifolds
in $M$ of varying dimensions,
between $0$ and $(n-2)/2$.
Consider the following cases:

$\text{Case 1.}$ $\Gamma$ contains a submanifold of dimension $j$, with $1\le j\leq (n-2)/2$.

$\text{Case 2.}$ If $(M, g)$ is not conformally equivalent to the standard sphere $S^n$,
$\Gamma$ consists of finitely many points.

$\text{Case 3.}$  If $(M, g)$ is conformally equivalent to
$S^{n}$, $\Gamma$ consists of at least
two but only finitely many points.

Suppose that $\Omega_i$ is a sequence of increasing domains with smooth boundary in $M$
which converges to $M \setminus\
\Gamma$ and that $g_i$ is the complete conformal metric in $\Omega_i$
with the constant scalar curvature $-n(n-1)$.
Then, for sufficiently large $i$, $g_i$ has a positive Ricci curvature component somewhere in
$\Omega_{i}$. Moreover,
the maximal Ricci curvature in $\Omega_{i}$ diverges to $\infty$ as $i\to\infty$.
\end{theorem}

By the convergence of $\Omega_i$  to $M \setminus\Gamma$, we mean $\cup_{i=1}^\infty \Omega_i
=M\setminus \Gamma$ and, for any $\varepsilon>0$,
$\partial\Omega_i$ is in the $\varepsilon$-neighborhood of $\Gamma$ for all large $i$.
By convention, a zero dimensional submanifold is simply a point.

The difference between Case 2 and Case 3 in Theorem \ref{thrm-large-positive-Ricci}
lies on the number of isolated points when closed smooth embedded submanifolds
of positive dimension are absent from $\Gamma$.
On manifolds conformally
equivalent to the standard sphere, the number of the isolated points has to be at least two;
while on manifolds not conformally
equivalent to the standard sphere, we can allow one point.
Theorem \ref{thrm-large-positive-Ricci}
does not necessarily hold if $\Gamma$ consists of one point on manifolds conformally
equivalent to the standard sphere.
See Remark \ref{rmk-single-point-sphere}.
Such a difference demonstrates that the background manifolds also play a
decisive role in the issue studied in this paper.

As a consequence of Case 2, with $\Gamma$ consisting of just one point,
we have the following rigidity result.

\begin{theorem}\label{them-rigidity}
Let $(M, g)$ be a compact Riemannian manifold of dimension $n\geq3$ without boundary
and $x_0$ be a point in $M$.
Suppose that there exists a sequence $\Omega_i$ of increasing domains
with smooth boundary in $M$ which converges to $M \setminus\
\{x_0\}$, such that the complete conformal metric in $\Omega_i$
with the constant scalar curvature $-n(n-1)$ has uniformly bounded Ricci curvatures in
$\Omega_{i}$. Then, $M$ is conformally equivalent to the standard sphere $S^n$.
\end{theorem}


The set $\Gamma$ in Theorem \ref{thrm-large-positive-Ricci} resembles that in \cite{Mazzeo Pacard1999}.
The metric $g_i$ in $\Omega_i$ as in
Theorem \ref{thrm-large-positive-Ricci} is assumed to have a {\it negative} constant scalar curvature, $-n(n-1)$.
As $\Omega_i$ becomes close to $M\setminus \Gamma$, Ricci curvatures
split in sign. Some components become negatively large, while some others positively large.

The proof of Theorem \ref{thrm-large-positive-Ricci}
relies on a careful analysis of the Ricci curvatures of the complete conformal metrics near boundary.
The polyhomogeneous expansion provides correct values near boundary for applications of
the maximum principle. The Yamabe invariant of $(M,g)$ plays a crucial role and
determines behaviors of the convergence of the conformal factors.
Among the three cases listed in Theorem \ref{thrm-large-positive-Ricci}, Case 2 is the most difficult to
prove, especially when the Yamabe invariant is between zero and that of the standard sphere.
When $\Gamma$ consists of one point $x_0$, we need expansions of Green's functions.
If $n=3,4,5$, or $M$ is conformally flat in a neighborhood of $x_0$, we need to employ the
positive mass theorem. If $n\ge 6$ and  $M$ is not conformally flat in a neighborhood of $x_0$, we need to
distinguish the two cases $W(x_0)\neq 0$ and $W(x_0)=0$. Discussions for the latter case is much more
complicated than the former case.
The proof here seems to resemble the solution of the Yamabe problem, but with one twist.
In solving the Yamabe problem,
we can choose a point where the Weyl tensor is not zero in the case that $M$ is not conformally flat.
In our case,
$x_0$ is a given point and the Weyl tensor can be zero even if $M$ is not conformally
flat in a neighborhood of $x_0$.
Different vanishing orders of $W$ at $x_0$ requires different methods.
In fact, we also need to employ the positive mass theorem if
the Weyl tensor vanishes at $x_0$ up to a sufficiently high order.
The positive mass theorem has been known to be true if $3\leq n\leq 7$, or $M$ is locally conformally flat, or
$M$ is spin. (See \cite{LeeParker1987}, \cite{SchoenYau1979}, \cite{SchoenYau1994} and \cite{Witten1981}.)
These conditions might be technical and could be removed according to the recent papers
\cite{Lohkamp4} and
\cite{SchoenYau2017}. 
Refer to Remark \ref{unknown} on how the positive mass theorem
is used in the proof of
Theorem \ref{thrm-large-positive-Ricci}.



\smallskip

The paper is organized as follows.
In Section \ref{An Equivalent Form},
we discuss  some preliminary identities.
In Section \ref{sec-Conv}, we
study the Ricci curvatures of complete conformal metrics
in bounded convex domains in the Euclidean space and prove Theorem \ref{main reslut}.
In Section \ref{sec-compact-manifolds}, we study the Ricci curvatures of complete conformal
metrics in domains in compact manifolds and prove
Theorem \ref{thrm-large-positive-Ricci}.
In Section \ref{sec-Examples}, we present several examples in the Euclidean space.

We would like to thank Matthew Gursky for suggesting the problem studied in this paper and many helpful
discussions. Gursky graciously shared many of his stimulating computations with us.
We would also like to thank Yuguang Shi for helpful discussions.

\section{Preliminaries}\label{An Equivalent Form}

Let $(M,g)$ be a smooth Riemannian manifold of dimension $n$, for some $n\ge 3$, either compact
without boundary or noncompact and complete. Assume $\Omega\subset M$ is a smooth domain,
with an $(n-1)$-dimensional boundary. If
$(M, g)$ is noncompact, we assume, in addition, that $\Omega$ is bounded. We consider the following problem:
\begin{align}
\label{eq-MEq} \Delta_{g} u -\frac{n-2}{4(n-1)}S_gu&= \frac14n(n-2) u^{\frac{n+2}{n-2}} \quad\text{in }\,\Omega,\\
\label{eq-MBoundary}u&=\infty\quad\text{on }\partial \Omega,
\end{align}
where $S_g$ is the scalar curvature of $M$.
According to Loewner and Nirenberg \cite{Loewner&Nirenberg1974} for $(M,g)=(S^n, g_{S^n})$ and
Aviles and McOwen \cite{AM1988DUKE} for the general case,
\eqref{eq-MEq} and \eqref{eq-MBoundary} admits a unique positive solution.
We note that $u^{\frac{4}{n-2}}g$ is
the complete metric with a constant scalar curvature $-n(n-1)$ on $\Omega$.
Andersson, Chru\'sciel and Friedrich \cite{ACF1982CMP} and Mazzeo \cite{Mazzeo1991} established
the polyhomogeneous expansions for the solutions. For the first several terms, we have
$$u=d^{-\frac{n-2}{2}}\Big[1+\frac{n-2}{4(n-1)}H_{\partial\Omega}d+O(d^2)\Big],$$
where $d$ is the distance to $\partial\Omega$ and $H_{\partial\Omega}$ is the mean curvature of $\partial\Omega$
with respect to the interior
unit normal vector of $\partial\Omega$.
Set
\begin{equation}\label{eq-def-v}v=u^{-\frac{2}{n-2}}.\end{equation}
Then,
\begin{align}\label{eq-MEq-v}
v\Delta_{g} v +\frac{1}{2(n-1)}S_{ g} v^2&= \frac{n}{2}(|\nabla_{g} v|^2-1)\quad\text{in }\Omega, \\
\label{eq-MBoundary-v}v&=0\quad\text{on }\partial \Omega.
\end{align}
Moreover,
\begin{equation}\label{boundary expansion v}v=d-\frac{1}{2(n-1)}H_{\partial\Omega}d^2+O(d^3).\end{equation}
This implies
\begin{equation}\label{boundary expansion v-gradient}|\nabla_gv|=1\quad\text{on }\partial\Omega.\end{equation}
We will use this repeatedly later on.

Consider the conformal metric
\begin{equation}\label{eq-conformal-metric}
g_\Omega=u^{\frac{4}{n-2}}g=v^{-2}g.\end{equation}
For a unit vector $X$ of  $g$, $vX$ is a unit vector of $g_\Omega$.
Let $R_{ij}$ be the Ricci components of $g$ in a local frame for the metric $g$ and
$R^\Omega_{ij}$ be the Ricci components of $g_\Omega$
in the corresponding frame for the metric $g_\Omega$.
Then,
\begin{equation*}
{R}^\Omega_{kl}=v^2R_{kl}+(n-2)\big[vv_{,kl}-\frac12g_{kl}|\nabla _gv|^2\big]
+g_{kl}\big[v\Delta_gv-\frac{n}{2}|\nabla_gv|^2\big].
\end{equation*}
By \eqref{eq-MEq-v}, we have
\begin{equation}\label{Ricci cur  in v Manifold-1}
{R}^\Omega_{kl}=v^2R_{kl}+(n-2)\big[vv_{,kl}-\frac12g_{kl}|\nabla _gv|^2\big]
-g_{kl}\big[\frac{1}{2(n-1)}v^2S_g+\frac{n}{2}\big],
\end{equation}
or
\begin{equation}\label{Ricci cur  in v Manifold}
{R}^\Omega_{kl}=v^2R_{kl}-\frac{1}{2(n-1)}v^2g_{kl}S_g+(n-2)vv_{,kl}-\frac{n-2}{2}g_{kl}|\nabla_gv|^2-\frac{n}{2}g_{kl}.
\end{equation}
We emphasize that \eqref{Ricci cur  in v Manifold-1} and \eqref{Ricci cur  in v Manifold}
play important roles in the rest of the paper.
By \eqref{eq-MBoundary-v} and \eqref{boundary expansion v-gradient}, we obtain
$${R}^\Omega_{kl}=-(n-1)g_{kl}+O(d).$$
In other words, the Ricci curvatures of conformal metrics $g_\Omega$
are asymptotically equal to $-(n-1)$ near boundary.
We note that this holds in arbitrary smooth domains.

If $(M,g)=(\mathbb R^n, g_E)$,
then \eqref{eq-MEq} and \eqref{eq-MBoundary} reduce to
\begin{align}
\label{eq-MainEq} \Delta u  &= \frac14n(n-2) u^{\frac{n+2}{n-2}} \quad\text{in }\,\Omega,\\
\label{eq-MainBoundary}u&=\infty\quad\text{on }\partial \Omega.
\end{align}
In this case, the function $v$ given by \eqref{eq-def-v} satisfies
\begin{equation}\label{eq-transf} v\Delta v=\frac{n}{2}(|\nabla v |^2-1). \end{equation}
Let $g_\Omega$ be the metric given by \eqref{eq-conformal-metric} with $g=g_E$, i.e.,
$g_\Omega=v^{-2}g_E$.
Denote by $R^\Omega_{ijij}$ and $R^\Omega_{ij}$ the sectional curvatures and Ricci curvatures of
$g_\Omega$ in the orthonormal coordinates of $g_\Omega$, respectively.
Then, for $i\neq j$,
\begin{equation}\label{Sectional cur in v}
R^\Omega_{ijij}=vv_{ii}+vv_{jj}-|\nabla v|^{2}, \end{equation}
and, for any $i, j$,
\begin{equation}\label{Ricci cur  in v}
R^\Omega_{ij}=(n-2)vv_{ij}-\Big[\frac{n-2}{2}|\nabla v|^{2}+\frac{n}{2}\Big]\delta_{ij}.\end{equation}
Hence, for any $i\neq j$,
\begin{equation*}
R^\Omega_{ijij}=-1+O(d), \end{equation*}
and, for any $i, j$,
\begin{equation*}
R^\Omega_{ij}=-(n-1)\delta_{ij}+O(d).\end{equation*}
Note
$$v_i=-\frac{2}{n-2}u^{-\frac{2}{n-2}-1}u_i,$$
and
\begin{equation}\label{eq-relation-2derivatives}
v_{ij}=-\frac{2}{n-2}u^{-\frac{2}{n-2}}\Big(\frac{u_{ij}}{u}-\frac{n}{n-2}\frac{u_iu_j}{u^2}\Big).\end{equation}
We can also express $R^\Omega_{ijij}$ and $R^\Omega_{ij}$ in terms of $u$.

\section{Convex Domains in Euclidean Spaces}\label{sec-Conv}

In this section, we study Ricci curvatures and sectional curvatures of
the complete conformal metrics associated with the Loewner-Nirenberg problem
in bounded domains in the Euclidean space.
We will prove that the complete conformal metrics in bounded convex domains
have negative sectional curvatures.
The convexity assumption allows us to
apply a convexity theorem by Kennington  \cite{Kennington1985} directly
to conformal factors.

\begin{proof}[Proof of Theorem \ref{main reslut}]
Let $u$ be the solution of \eqref{eq-MainEq} and \eqref{eq-MainBoundary} in $\Omega$
and $v$ be given by \eqref{eq-def-v}.
Then, $g=v^{-2}g_E$ is the complete conformal metric in $\Omega$ with a
constant scalar curvature $-n(n-1)$. Denote by $R_{ijij}$ and $R_{ij}$
the sectional curvatures and Ricci curvatures of $g$ in the orthonormal coordinates
of $g$, given by
\eqref{Sectional cur in v} and \eqref{Ricci cur  in v}, respectively. Here, we suppress
$\Omega$ from the notations $g$, $R_{ij}$ and $R_{ijij}$.

By applying  the Laplacian operator to \eqref{eq-transf}, we get
$$v\Delta (\Delta v)+(2-n)\nabla v \nabla(\Delta v)= n|\nabla^2v|^2-(\Delta v)^2 \geq 0.$$
First, we assume that the boundary of $\Omega$ is smooth. By \eqref{boundary expansion v}, we have
$$\Delta v=-H_{\partial\Omega}-\frac{1}{n-1}H_{\partial\Omega}+O(d)=-\frac{n}{n-1}H_{\partial\Omega}+O(d).$$
Since $\Omega$ is convex, we have $\Delta v\le 0$ on $\partial \Omega$.
By the strong maximum principle, we obtain $\Delta v<0$ in $\Omega$.
Therefore, $|\nabla v| <1$ in $\Omega$ by \eqref{eq-transf}.
Next, we apply
Theorems 3.1 and 3.2 \cite{Kennington1985} in $\Omega$ and conclude that
$v$ is concave.
In fact, we write (2.13) as
$$\Delta v=\frac{n(|\nabla v |^2-1)}{2v}.$$
Then, we can verify directly the hypothesis (i) of Theorem 3.1 by $|\nabla v| <1$
and the hypothesis (ii) of Theorem 3.2 of [10], since
$|\nabla v| <1$, $v=0$ on $\partial \Omega$, and $\nabla v$ is the inner unit normal vector on $\partial \Omega$.

For general bounded convex domains, we can obtain the concavity of $v$ by approximations.

Since $v_{ii}\leq 0$, by \eqref{Sectional cur in v} and \eqref{Ricci cur  in v}, we get,
for  any $i\neq j$,
$$R_{ijij}\leq 0,$$
and, for any $i$,
$$R_{ii} \leq -\frac{n}{2}.$$
By \eqref{eq-relation-2derivatives}, we also have, for any $i$,
\begin{equation}\label{v-concave}\frac{u_{ii}}{u}-\frac{n}{n-2}\frac{u_i^2}{u^2} \geq 0. \end{equation}

Next, we prove that $R_{ijij}$ does not vanish in $\Omega$ for any $i\neq j$.
If $R_{ijij}= 0$ at some point $x_0\in\Omega$ for some $i\neq j$, then
$$\Big(\frac{u_{ii}}{u}-\frac{n}{n-2}\frac{u_i^2}{u^2}\Big)(x_0)=0,$$
 and
$$\nabla u(x_0)=0.$$
In fact, by (2.14) and $vv_{ii}\le0$, if $R_{ijij}(x_0)=0$, then
$\nabla v(x_0)=0$ and thus $\nabla u(x_0)=0$.
Hence, $u_{ii}(x_0)=0$.
Applying $\partial_i$ twice to the equation \eqref{eq-MainEq}, we get
$$\Delta u_{ii}=\frac14n(n+2)u^{\frac{n+2}{n-2}}\frac{u_{ii}}{u}+\frac{n(n+2)}{n-2}u^{\frac{n+2}{n-2}}\frac{u_i^2}{u^2}.$$
Combining with \eqref{v-concave}, we have
$$\Delta u_{ii}-\frac14(n+2)(n+4)u^{\frac{4}{n-2}}u_{ii}
=(n+2)u^{\frac{n+2}{n-2}}\Big(\frac{n}{n-2}\frac{u_i^2}{u^2}-\frac{u_{ii}}{u}\Big)\leq0.$$
By the strong maximum principle, we have $u_{ii}\equiv0 $ in $\Omega$.
On the other hand, by $u_{i}(x_0)=0$, we get $u_{i}\equiv0$ on $\Omega \cap \{x_0+te_i|t \in \mathbb{R}\}$.
Therefore, $u$ is constant on $\Omega \cap \{x_0+te_i|t \in \mathbb{R}\}$.
This leads to a contradiction.
Therefore, we have, for any $i\neq j$,
$$R_{ijij}< 0.$$
Similarly, we have, for any $i$,
$$R_{ii} <-\frac{n}{2}.$$
This completes the proof.
\end{proof}

We point out that the upper bounds of sectional curvatures and Ricci curvatures
in Theorem \ref{main reslut} are given by strict inequalities and, in fact, are optimal.
To see this, set
$$D=\big\{(x_1, \cdots, x_n)|-1<x_n<1\big\}\subset\mathbb R^n.$$
Let $g_D$ be the complete conformal metric
with the constant scalar curvature $-n(n-1)$ in $D$, and $R^{D}_{ijij}$ and
$R^{D}_{ii}$ be the sectional curvatures and Ricci curvatures of $g_D$, respectively.
Then, $R^{D}_{ijij}(0)=0$, for $i\neq j$, $i,j \neq n$,
and $R^{D}_{ii}(0)=-{n}/{2}$, for $i \neq n$.
Set $\Omega_{R}=D\bigcap B_{R}$.
Then, $R^{\Omega_R}_{ijij}(0)\rightarrow 0$, for $i\neq j$, $i,j \neq n$,
and $R^{\Omega_R}_{ii}(0)\to-{n}/{2}$ for $i \neq n$, as $R\to\infty$.

\section{Domains in Compact Manifolds}\label{sec-compact-manifolds}

In this section, we discuss domains in compact Riemannian manifolds without boundary.
We construct domains with boundary close to certain sets of low dimension such that the complete
conformal metrics with a negative constant scalar curvature have positive Ricci components somewhere.
Throughout this section, the Yamabe invariant plays a crucial role. It determines convergence behaviors
of conformal factors and, as a consequence, the methods to be employed.
In certain cases, we need to employ expansions of the
Green's function, and also the positive mass theorem.

Suppose $(M, g)$ is a compact Riemannian manifold of dimension $n\geq 3$ without boundary.
The Yamabe invariant of $M$ is given by
$$\lambda(M, [g])=\inf\Big\{
\frac{\int_M  (  |\nabla_g \phi|^2 +\frac{n-2}{4(n-1)} S_g\phi^2 )dV_g   }{(\int_M \phi^{\frac{2n}{n-2}}dV_g)^{\frac{n-2}{n}}}
\Big|\,\phi \in C^{\infty}(M),\phi>0 \Big\}.$$
The conformal Laplacian of $(M,g)$ is given by
$$L_g=-\Delta_g +\frac{n-2}{4(n-1)}S_g.$$
For any function $\psi$ in $M$, we have
$$L_g(u\psi)= u^{\frac{n+2}{n-2}}L_{u^{\frac{4}{n-2}}g}(\psi ).$$

We first prove a convergence result which plays an important role in this section.
According to signs of Yamabe invariants, conformal factors exhibit different convergence behaviors.
We note that the maximum principle is applicable to the operator $L_g$ if $S_g\ge 0$.

\begin{lemma}\label{lemme-convergence}
Suppose $(M, g)$ is a compact Riemannian manifold of dimension $n\geq 3$ without boundary,
with a constant scalar curvature $S_g$, and
$\Gamma$ is a closed smooth submanifold of dimension $d$ in $M$, $0\le d\leq \frac{n-2}{2}$.
Suppose $\Omega_i$ is a sequence of increasing domains with smooth boundary in $M$
which converges to $M \setminus\Gamma$.
Let  $u_i$ be the solution
of \eqref{eq-MEq} and \eqref{eq-MBoundary} in $\Omega_i$.
Then, for any positive integer $m$, if $S_g\ge 0$, 
\begin{equation}\label{u_i go to zero-plus}
u_i\rightarrow 0\quad\text{in }C^{m}_{\mathrm{loc}}( M \setminus\Gamma)
\text{ as }i\rightarrow\infty,\end{equation}
and, if $S_g<0$, 
\begin{equation}\label{u_i go to zero-minus}
u_i\rightarrow \Big(\frac{-S_g}{n(n-1)}\Big)^{\frac{n-2}{4}}
\quad\text{in }C^{m}_{\mathrm{loc}}( M \setminus\Gamma)
\text{ as }i\rightarrow\infty.\end{equation}
\end{lemma}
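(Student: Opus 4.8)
The plan is to show that the sequence $u_i$ is \emph{decreasing}, to identify its limit $u_\infty$ as a solution of \eqref{eq-MEq} on $M\setminus\Gamma$, and then to use the dimension hypothesis $0\le d\le\frac{n-2}{2}$ together with the maximum principle on the closed manifold $M$ to force $u_\infty$ to be the stated constant. The mechanism for monotonicity is the conformal covariance $L_g(w\psi)=w^{\frac{n+2}{n-2}}L_{w^{4/(n-2)}g}\psi$ recalled above: if $w_1,w_2>0$ both solve \eqref{eq-MEq} on a domain $\Omega$, writing $w_2=w_1\phi$ and using that $w_1^{4/(n-2)}g$ has scalar curvature $-n(n-1)$ shows that $\phi$ satisfies $\Delta_{g_1}\phi=\tfrac{n(n-2)}{4}\phi\big(\phi^{\frac{4}{n-2}}-1\big)$ with $g_1=w_1^{4/(n-2)}g$. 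Applying this on $\Omega=\Omega_i$ with $w_1=u_i$ and $w_2=u_{i+1}$: by the boundary expansion $u_j\sim d_j^{-(n-2)/2}$ (with $d_j$ the distance to $\partial\Omega_j$) the ratio $\phi=u_{i+1}/u_i$ attains its supremum at an interior point $x_0$, where $\Delta_{g_i}\phi(x_0)\le 0$ forces $\phi(x_0)\le1$; hence $u_{i+1}\le u_i$ on $\Omega_i$. Taking instead $w_2\equiv c:=\big(\tfrac{-S_g}{n(n-1)}\big)^{\frac{n-2}{4}}$, which is a constant solution of \eqref{eq-MEq} when $S_g<0$ (as $c^{4/(n-2)}g$ then has scalar curvature $-n(n-1)$), the same argument gives $u_i\ge c$; when $S_g\ge0$ we only record that $u_i>0$.

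For the passage to the limit: given a compact $K\subset M\setminus\Gamma$ there is $i_0$ with $K\subset\Omega_{i_0}$, so on $K$ the functions $u_i$ are bounded above by $u_{i_0}$ and below by a positive constant for $i\ge i_0$ (the lower bound being $c$ when $S_g<0$; in any case the monotone limit is $\ge0$). Interior elliptic (Schauder) estimates for \eqref{eq-MEq}, whose right-hand side is then bounded in $C^{m-1}$ on interior compact sets, together with Arzel\`a--Ascoli and the monotone pointwise limit, give $u_i\to u_\infty$ in $C^m_{\mathrm{loc}}(M\setminus\Gamma)$ for every $m$, where $u_\infty$ solves \eqref{eq-MEq} on $M\setminus\Gamma$ with $u_\infty\ge c$ (resp. $u_\infty\ge0$).

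It remains to show $u_\infty$ is constant. Put $w=u_\infty-c$ when $S_g<0$ and $w=u_\infty$ when $S_g\ge0$. When $S_g<0$, subtracting the equation for $c$ from that for $u_\infty$ and using $u_\infty\ge c$ with convexity of $t\mapsto t^{\frac{n+2}{n-2}}$ gives $w\ge0$ and $\Delta_g w\ge0$ on $M\setminus\Gamma$; when $S_g\ge0$, \eqref{eq-MEq} directly yields $\Delta_g u_\infty=\tfrac{n-2}{4(n-1)}S_g u_\infty+\tfrac{n(n-2)}{4}u_\infty^{\frac{n+2}{n-2}}\ge0$, so again $w\ge0$ and $\Delta_g w\ge0$. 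The essential point is that $w$ stays bounded as one approaches $\Gamma$ --- equivalently, that $u_\infty^{4/(n-2)}g$ is \emph{not} a complete metric near $\Gamma$ --- and this is exactly where the hypothesis $d\le\frac{n-2}{2}$ is used: for \eqref{eq-MEq} with the critical exponent, $d\le\frac{n-2}{2}$ is precisely the range in which a prescribed singular set of dimension $d$ is removable for positive solutions, this being the content of the necessity direction of the Loewner--Nirenberg \cite{Loewner&Nirenberg1974} and Aviles--McOwen \cite{AM1988DUKE} theorems. Concretely one compares $u_i$, in Fermi coordinates around $\Gamma$, with $i$-independent supersolutions of \eqref{eq-MEq} comparable to $A\,\mathrm{dist}(\cdot,\Gamma)^{-(n-2)/2}$ --- the profile that also dominates $u_i$ near $\partial\Omega_i$ by the boundary expansion --- and lets $i\to\infty$ to bound $u_\infty$ near $\Gamma$; the resulting growth is mild enough that $u_\infty$ extends to a $C^\infty$ solution of \eqref{eq-MEq} on all of $M$. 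Once $u_\infty$ is smooth on the closed manifold $M$, integrating $\Delta_g w\ge0$ over $M$ and using $\int_M\Delta_g w\,dV_g=0$ gives $\Delta_g w\equiv0$, so $w$ is harmonic, hence a constant; substituting the constant $u_\infty$ into \eqref{eq-MEq} and using that $t\mapsto\tfrac{n-2}{4(n-1)}S_g\,t+\tfrac{n(n-2)}{4}t^{\frac{n+2}{n-2}}$ has no zero on $[c,\infty)$ other than $t=c$ (and, when $S_g\ge0$, no zero on $[0,\infty)$ other than $t=0$) forces $u_\infty\equiv c$, which is \eqref{u_i go to zero-minus}, resp. $u_\infty\equiv0$, which is \eqref{u_i go to zero-plus}.

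The main obstacle is the boundedness/removability of $u_\infty$ near $\Gamma$. The borderline dimension $d=\frac{n-2}{2}$ is genuinely delicate: there the competing term $\mathrm{dist}(\cdot,\Gamma)^{-(n-2)/2}$ is harmonic in the directions transverse to $\Gamma$, so the supersolution must be refined (for instance with a logarithmic correction), and one must check that these barriers control $u_i$ uniformly in $i$ right up against the moving boundary $\partial\Omega_i$ --- this is where the polyhomogeneous expansions of Andersson--Chru\'sciel--Friedrich \cite{ACF1982CMP} and Mazzeo \cite{Mazzeo1991} enter.
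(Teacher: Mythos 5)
Your overall architecture is genuinely different from the paper's. The paper proves monotonicity by a direct maximum-principle comparison of $u_{i+1}$ and $u_i$ (and, for $S_g<0$, first establishes $u_i\ge c$ and then subtracts the equations), cites the second part of \cite{AM1988DUKE} (page 398) for boundedness of the decreasing limit $u$, and then closes the $S_g\ge0$ case by a barrier $\delta\rho^{-\frac{n-2}{2}+\epsilon_0}+\epsilon$ applied to the \emph{limit} $u$ (whose boundedness makes the comparison near $\Gamma$ trivial), letting $\delta,\epsilon\to0$. For $S_g<0$ the paper does not prove removability at all: it introduces auxiliary functions $u^\epsilon_i$ solving a scaled equation and shows $u_i\le u^\epsilon_i+c_\epsilon$, reducing to the $S_g\ge0$ mechanism. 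Your route, by contrast, establishes monotonicity via the conformally covariant ratio $\phi=u_{i+1}/u_i$ (clean, and with the bonus of handling both signs of $S_g$ uniformly), and then tries to close both cases by proving that $u_\infty$ extends to a smooth solution on all of $M$ and integrating $\Delta_g w\ge0$. The Liouville-by-integration endgame is elegant and a legitimately different path; the convexity computation giving $\Delta_g w\ge0$ when $S_g<0$ is correct.

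The gap is in the removability step, which you treat as the linchpin but do not actually close. The $i$-independent supersolution $A\,\mathrm{dist}(\cdot,\Gamma)^{-(n-2)/2}$ you propose does \emph{not} dominate $u_i$ near $\partial\Omega_i$: there $u_i\sim d_i^{-(n-2)/2}\to\infty$ while $\mathrm{dist}(\cdot,\Gamma)$ stays bounded away from $0$, so the boundary comparison you invoke fails, and no barrier that is finite on $\partial\Omega_i$ can sit above $u_i$ on $\Omega_i$. This is precisely why the paper applies its barrier to the limit $u$ rather than to the $u_i$, and only after it already knows $u$ is bounded. One can recover the growth estimate $u_\infty\lesssim\rho^{-(n-2)/2}$ by a pointwise Keller--Osserman comparison with balls contained in $\Omega_i$ rather than a global barrier, but that is exactly the critical rate — $u_\infty^{2n/(n-2)}$ is then only borderline integrable near $\Gamma$ — so it does not by itself imply boundedness or smooth extension; the removability at and near the critical exponent is the real content of the Aviles--McOwen result the paper cites. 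The fix is to replace your ``concrete'' barrier sketch by the same citation the paper uses (second part of \cite{AM1988DUKE}): once $u_\infty$ is bounded near $\Gamma$, codimension $\ge2$ plus elliptic regularity does give a smooth extension, and your $\int_M\Delta_g w\,dV_g=0$ argument then finishes both cases cleanly. As written, though, the crucial boundedness is asserted rather than proved, and the concrete argument offered for it does not work.
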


\begin{proof} We first consider the case $S_g\ge 0$. 
By the maximum principle, we have $u_i \geq u_{i+1}$ in $\Omega_i$. It is straightforward to verify, for any $m$,
$$u_i\rightarrow u\quad\text{in }C^{m}_{\mathrm{loc}}( M \setminus\Gamma)
\text{ as }i\rightarrow\infty,$$
where $u$ is a nonnegative solution of \eqref{eq-MEq} in $M \setminus\Gamma$.
By the second part of \cite{AM1988DUKE} (Page 398), $u$ is bounded.
Let $\rho(x)$ be a positive smooth function in $ M \setminus\Gamma$
which equals to $\text{dist}(x, \Gamma)$ in a neighborhood of $\Gamma$ in $M$.
Then,
$$\rho(x)\Delta_g \rho(x)\rightarrow n-d-1\quad\text{as }x\rightarrow\Gamma.$$
Take $\epsilon_0>0$ sufficiently small. Then,
$$\Delta_g \rho^{-\frac{n-2}{2}+\epsilon_0}=\Big(-\frac{n-2}{2}+\epsilon_0\Big)\rho^{-\frac{n+2}{2}+\epsilon_0}
\Big(\rho\Delta_g \rho -\big(\frac{n}{2}-\epsilon_0\big)|\nabla_g\rho|^2\Big).$$
Since $d\leq\frac{n-2}{2}$, we have $\Delta  \rho^{-\frac{n-2}{2}+\epsilon_0}<0$ near $\Gamma$. For any
$\epsilon>0$, we can find $\delta <\epsilon$ sufficiently small such that
$$\Delta_g(\delta \rho^{-\frac{n-2}{2}+\epsilon_0}+\epsilon)
-S_g (\delta \rho^{-\frac{n-2}{2}+\epsilon_0}+\epsilon)
\leq \frac{n-2}{4}(\delta \rho^{-\frac{n-2}{2}+\epsilon_0}+\epsilon)^{\frac{n+2}{n-2}}\quad\text{in }M\setminus\Gamma.$$
By the maximum principle, we have
$$u \leq \delta \rho^{-\frac{n-2}{2}+\epsilon_0}+\epsilon\quad\text{in }M\setminus\Gamma.$$
This implies $u\equiv 0$. In conclusion, we obtain \eqref{u_i go to zero-plus}.

We now consider the case $S_g<0$. 
We first prove $\Delta_g u_i\geq 0$ in $\Omega_i,$
or equivalently
\begin{equation}\label{eq-AlgebraicRelation}u_i\geq
\Big(\frac{-S_g}{n(n-1)}\Big)^{\frac{n-2}{4}}
\quad\text{in }\Omega_i.\end{equation}
If \eqref{eq-AlgebraicRelation} is violated somewhere,
then $u_i$ must assume its minimum at some point $x_0$ in the set
$$ \Big\{x\in \Omega_i :\, \frac14n(n-2) u_i^{\frac{n+2}{n-2}} +\frac{n-2}{4(n-1)}S_gu_i< 0 \Big\}.$$
On the other hand, we have $
\Delta_g u_i(x_0)\geq 0$, which leads to a contradiction.
By taking a difference, we have
$$\Delta_g (u_{i+1}-u_{i})=c_i(u_{i+1}-u_{i})\quad \text{in } \Omega_{i},$$
where $c_i$ is a nonnegative function in $\Omega_{i}$ by \eqref{eq-AlgebraicRelation}.
The maximum principle implies $u_{i+1}\leq u_{i}$ in $\Omega_i$.
Then, for any $m$,
$$u_i\rightarrow u\quad\text{in }C^{m}_{\mathrm{loc}}( M \setminus\Gamma)
\text{ as }i\rightarrow\infty,$$
where $u$ is a solution of \eqref{eq-MEq} in $M \setminus\Gamma$.
By \eqref{eq-AlgebraicRelation}, we have
\begin{equation}\label{eq-u lower bounded} u\geq
\Big(\frac{-S_g}{n(n-1)}\Big)^{\frac{n-2}{4}}\quad\text{in }M\setminus\Gamma.\end{equation}
For $\epsilon>0$ sufficiently small, let $u^{\epsilon}_i$ be the solution of
\begin{align}
\label{eq-MEq2} \Delta_g u^{\epsilon}_i &= \epsilon\frac{n(n-2)}{4}( u^{\epsilon}_i)^{\frac{n+2}{n-2}}
\quad\text{in }\Omega_i,\\
\label{eq-MBoundary2}u^{\epsilon}_i &=\infty\quad\text{on }\partial \Omega_i.
\end{align}
The existence of $u^{\epsilon}_i$ can be obtained by the standard method.
More specifically, for each integer $j$, we solve
\begin{align}
\label{eq-MEq21} \Delta_g u^{\epsilon,j}_i &= \epsilon\frac{n(n-2)}{4}( u^{\epsilon,j}_i)^{\frac{n+2}{n-2}}
\quad\text{in }\Omega_i,\\
\label{eq-MBoundary21}u^{\epsilon,j}_i &=j\quad\text{on }\partial \Omega_i.
\end{align}
By the maximum principle, we have $u^{\epsilon,j}_i \leq u^{\epsilon,k}_i$ if $j\le k$.
For any $x_0 \in \Omega_i$, choose normal coordinates near $x_0$. Then, it is easy to check that
$$u_{r, x_0}(x)=2\epsilon^{-\frac{n-2}{4}}\left(\frac{2r}{r^{2}-|x|^{2}}\right)^{\frac{n-2}{2}}$$ is a supersolution of \eqref{eq-MEq2}
when $r$ is sufficiently small, depending on $x_0$. Hence, by the maximum principle, we have for each point $x$,
$u^{\epsilon,j}_i(x)\le C(x)$, independent of $j$. Therefore,
by standard estimates, $u^{\epsilon,j}_i$ converges to some  $u^{\epsilon}_i$ in
$C^{m}_{\mathrm{loc}}(\Omega_i)$ as $j\rightarrow\infty$ for any $m$, and
$u^{\epsilon}_i \in C^{\infty}( \Omega_i)$ is a solution of \eqref{eq-MEq2}-\eqref{eq-MBoundary2}.

By the same method as in the proof of the case $S_g\geq0$, we obtain, for any $m$,
$$u^{\epsilon}_i\rightarrow 0\quad\text{in }C^{m}_{\mathrm{loc}}( M \setminus\Gamma)
\text{ as }i\rightarrow\infty.$$
Next, we can verify
\begin{align*}
\Delta_g \Big[u^{\epsilon}_i+  \Big(\frac{-S_g}{(1-\epsilon)n(n-1)}\Big)^{\frac{n-2}{4}} \Big]
&\le\frac{n-2}{4(n-1)}S_g
\Big[u^{\epsilon}_i+ \Big(\frac{-S_g}{(1-\epsilon)n(n-1)}\Big)^{\frac{n-2}{4}} \Big] \\
&\qquad +\frac{n(n-2)}{4} \Big[
u^{\epsilon}_i+  \Big(\frac{-S_g}{(1-\epsilon)n(n-1)}\Big)^{\frac{n-2}{4}} \Big]^{\frac{n+2}{n-2}}.
\end{align*}
To prove this, we simply split the last term according to $1=\epsilon+(1-\epsilon)$. Then,
\begin{align*}
&\frac{n-2}{4(n-1)}S_g
\Big[u^{\epsilon}_i+ \Big(\frac{-S_g}{(1-\epsilon)n(n-1)}\Big)^{\frac{n-2}{4}} \Big]\\
&\qquad\qquad+\frac{n(n-2)}{4} \Big[
u^{\epsilon}_i+  \Big(\frac{-S_g}{(1-\epsilon)n(n-1)}\Big)^{\frac{n-2}{4}} \Big]^{\frac{n+2}{n-2}}
-\epsilon\frac{n(n-2)}{4}( u^{\epsilon}_i)^{\frac{n+2}{n-2}}\\
&\qquad\geq\frac{n(n-2)}{4} \Big[
u^{\epsilon}_i+  \Big(\frac{-S_g}{(1-\epsilon)n(n-1)}\Big)^{\frac{n-2}{4}} \Big] \cdot\\
&\qquad\qquad \Big\{(1-\epsilon)
\Big[u^{\epsilon}_i+ \Big(\frac{-S_g}{(1-\epsilon)n(n-1)}\Big)^{\frac{n-2}{4}} \Big]^{\frac{4}{n-2}}
+\frac{1}{n(n-1)}S_g\Big\},
\end{align*}
which is nonnegative. By the maximum principle, we have
$$u_i \leq u^{\epsilon}_i+  \Big(\frac{-S_g}{(1-\epsilon)n(n-1)}\Big)^{\frac{n-2}{4}}
\quad\text{in }\Omega_i,$$
where we can verify the boundary condition by the polyhomogeneous expansions of $u_i$ and $u^{\epsilon}_i$.
Therefore, we have
\begin{equation}\label{eq-u upper bounded}
u\leq  \Big(\frac{-S_g}{(1-\epsilon)n(n-1)}\Big)^{\frac{n-2}{4}}\quad\text{in }M\setminus\Gamma. \end{equation}
This holds for any $\epsilon\in (0,1)$.
Combining \eqref{eq-u lower bounded} and \eqref{eq-u upper bounded}, we obtain
$$u=\Big(\frac{-S_g}{n(n-1)}\Big)^{\frac{n-2}{4}}.$$
In conclusion, we obtain \eqref{u_i go to zero-minus}. \end{proof}

A similar result holds if the scalar curvature has a fixed sign, not necessarily constant.

Now, we study the case that the boundary is close to a closed smooth submanifold of low dimension.
The result below holds for all compact manifolds without boundary, but
different signs of the Yamabe invariants require different methods, mostly due to the different convergence
behaviors as in Lemma \ref{lemme-convergence}.

\begin{theorem}\label{prop-general mainfold}
Suppose $(M, g)$ is a compact Riemannian manifold of dimension $n\geq 3$ without boundary and
$\Gamma$ is a closed smooth submanifold of dimension $d$ in $M$, $1\le d\leq \frac{n-2}{2}$.
Suppose $\Omega_i$ is a sequence of increasing domains with smooth boundary in $M$
which converges to $M \setminus\Gamma$ and $g_i$ is the complete conformal metric in $\Omega_i$
with the scalar curvature $-n(n-1)$.
Then, for sufficiently large $i$, $g_{i}$
has a positive Ricci curvature component somewhere in
$\Omega_{i}$. Moreover,
the maximal Ricci curvature of $g_i$ in $\Omega_{i}$ diverges to $\infty$ as $i\to\infty$.
\end{theorem}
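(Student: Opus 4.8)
The plan is to combine the convergence in Lemma~\ref{lemme-convergence} with the Ricci identity \eqref{Ricci cur in v Manifold-1} and a rescaling analysis of the solutions near $\Gamma$. Since $g_i=v_i^{-2}g$ depends only on $[g]$, I would first replace $g$ by a Yamabe metric, so that $S_g$ is constant, with $S_g<0$, $=0$, or $>0$ according as $\lambda(M,[g])<0$, $=0$, or $>0$; this yields three cases, the only difference being the behaviour in Lemma~\ref{lemme-convergence}: $v_i\to v_\ast:=\big(-S_g/n(n-1)\big)^{1/2}>0$ in $C^m_{\mathrm{loc}}(M\setminus\Gamma)$ when $S_g<0$, and $u_i\to0$ (so $v_i\to\infty$ locally) when $S_g\ge0$. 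Taking $k=l=e$ for a $g$-unit vector $e$ in \eqref{Ricci cur in v Manifold-1} gives
\begin{equation*}
R^\Omega_{ee}=(n-2)\Big(v_i(v_i)_{,ee}-\tfrac12|\nabla_g v_i|^2\Big)-\tfrac n2+v_i^2\Big(R_{ee}-\tfrac{S_g}{2(n-1)}\Big),
\end{equation*}
and when $S_g<0$ the bound $u_i\ge\big(-S_g/n(n-1)\big)^{(n-2)/4}$ of \eqref{eq-AlgebraicRelation} yields $v_i\le 1/v_\ast$ on $\Omega_i$, so the last term is uniformly bounded. Hence it suffices to exhibit, for each large $i$, a point $x_i$ with $\rho(x_i):=\mathrm{dist}_g(x_i,\Gamma)\to0$ and a $g$-unit direction $e$ there for which $v_i(v_i)_{,ee}-\tfrac12|\nabla_g v_i|^2\to+\infty$; then $R^\Omega_{ee}(x_i)\to\infty$, and since the maximal Ricci curvature of $g_i$ over $\Omega_i$ dominates $R^\Omega_{ee}(x_i)$, both assertions follow. (When $S_g\ge0$ the same reduction works, with the ceiling on $v_i$ replaced by the fact that $v_i\to\infty$ away from $\Gamma$, which only strengthens the blow-up.)

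The mechanism is that $\dim\Gamma=d\le\tfrac{n-2}{2}$ forbids the asymptotic profile of a complete conformal metric near $\Gamma$, and $v_i$ feels this on the scale $r_i:=\sup_{\partial\Omega_i}\mathrm{dist}_g(\cdot,\Gamma)\to0$. In normal coordinates about a point of $\Gamma$, rescaling lengths by $1/r_i$ renders the metric Euclidean in the limit; for the model case of a round tube $\partial\Omega_i=\{\rho=r_i\}$ it identifies $v_i/r_i$ with a solution of \eqref{eq-transf} on the exterior of $\mathbb R^{d}\times\overline{B}_{1}^{\,n-d}$, whose $O(n-d)$-symmetric form is $v_i=r_i\psi(\rho/r_i)$ with $\psi\big(\psi''+\tfrac{n-d-1}{s}\psi'\big)=\tfrac n2\big((\psi')^2-1\big)$, $\psi(1)=0$, $\psi'(1)=1$ (the last from $|\nabla_g v_i|=1$ on $\partial\Omega_i$). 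One computes $\psi''(1)=\tfrac{n-d-1}{n-1}>0$, so $\psi$ is convex; a power tail $\psi\sim c\,s^{\alpha}$ forces $\alpha=\tfrac{2(n-d-2)}{n-2}$, which lies in $(1,2)$ exactly when $1\le d<\tfrac{n-2}{2}$ (at $d=\tfrac{n-2}{2}$ one gets $\psi\sim c\,s\log s$ instead). Along the radial direction $\nu=\nabla_g\rho$ this gives $R^\Omega_{\nu\nu}\to-\infty$, but for a $g$-unit vector $e$ tangent to the geodesic sphere $\{\rho=\rho(x_i)\}$ and normal to $\Gamma$ one has $(v_i)_{,ee}=\rho^{-1}|\nabla_g v_i|\,(1+o(1))$, so that
\begin{equation*}
v_i(v_i)_{,ee}-\tfrac12|\nabla_g v_i|^2=\frac{\psi\psi'}{s}-\tfrac12(\psi')^2\ \sim\ \Big(1-\tfrac\alpha2\Big)\alpha c^2 s^{2\alpha-2}=\frac{d}{n-2}\,\alpha c^2 s^{2\alpha-2}\ \longrightarrow\ +\infty ,
\end{equation*}
and it is precisely $1-\tfrac\alpha2=\tfrac{d}{n-2}$ being positive that requires $d\ge1$ (for $d=0$, $\alpha=2$ and this leading term vanishes, which is why $\dim\Gamma=0$ needs the extra input of Theorem~\ref{thrm-large-positive-Ricci}). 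Choosing $x_i$ with $\rho(x_i)=r_i^{(\alpha-1)/\alpha}\to0$ (where $v_i$ is of the order of its ceiling $1/v_\ast$ when $S_g<0$; when $S_g\ge0$ any $x_i\to\Gamma$ with $\rho(x_i)/r_i\to\infty$ works) gives $s\to\infty$, hence $R^\Omega_{ee}(x_i)\to\infty$.

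To make this rigorous, I would first use the polyhomogeneous expansion \eqref{boundary expansion v} and its derivatives — whose constants, after rescaling by $1/r_i$, are uniform because the boundary then has bounded geometry — to control $v_i$, $\nabla_g v_i$ and $\mathrm{Hess}_g v_i$ in a $g$-tube of width $\sim r_i$ about $\partial\Omega_i$. I would then propagate inward by the maximum principle, comparing $u_i=v_i^{-(n-2)/2}$ with super- and sub-solutions assembled from powers of $\rho-r_i$ and of $\rho$; the key structural fact, already exploited in Lemma~\ref{lemme-convergence}, is that $\rho^{-\frac{n-2}{2}+\epsilon_0}$ is superharmonic near $\Gamma$ because $d\le\tfrac{n-2}{2}$ (with $\epsilon_0>0$ retained at the borderline). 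This gives two-sided control of $v_i$ on $\{r_i<\rho<\delta\}$ reproducing the ODE picture up to bounded errors, and interior elliptic estimates for \eqref{eq-MEq-v} then upgrade it to the needed control of $\nabla_g v_i$ and $\mathrm{Hess}_g v_i$ at $x_i$, giving $R^\Omega_{ee}(x_i)\to\infty$.

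I expect the main obstacle to be exactly this a priori analysis on the scale-$r_i$ shell for \emph{arbitrary} sequences $\Omega_i$, whose boundaries need not be round tubes: one must control the Hessian of $v_i$ there, not merely $v_i$ itself, and reducing $v_i/r_i$ to the model ODE requires stability of that ODE over the growing interval $[1,\rho(x_i)/r_i]$, whereas the natural barriers only bound $v_i$ one-sidedly. Secondary difficulties are the case $S_g\ge0$, where $v_i$ is unbounded and the ``ceiling'' step is replaced by the divergence $u_i\to0$ (which only reinforces the blow-up but needs separate bookkeeping), and the borderline dimension $d=\tfrac{n-2}{2}$ with its logarithmic corrections.
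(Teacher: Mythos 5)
Your proposal takes a genuinely different route from the paper, and it has serious gaps that you yourself flag. The paper's proof of this theorem is a one-dimensional calculus argument, not a rescaling analysis. It picks a short geodesic $\sigma$ joining two points of $\Gamma$ (or, if $\Gamma$ is totally geodesic, a nearby curve $\sigma$ touching $\Gamma$ only at its endpoints), so that $\sigma$ enters $\Omega_i$ near both ends with $|\partial_t v_i|\approx 1$ there (by the polyhomogeneous expansion \eqref{boundary expansion v}) while $v_i$ is huge at the midpoint (by Lemma \ref{lemme-convergence}). Choosing $t_i$ so that $\partial_t(v_i\circ\sigma)$ is \emph{maximal} forces $\partial_{tt}(v_i\circ\sigma)(t_i)=0$, while $\partial_t v_i(t_i)\gtrsim v_i(\text{midpoint})/L\to\infty$ and $v_i(\sigma(t_i))\le L\,\partial_t v_i(t_i)$. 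Plugged into \eqref{Ricci cur in v Manifold}, the vanishing Hessian term and the dominant $-\tfrac{n-2}{2}|\nabla_g v_i|^2$ give $R^i_{\nu\nu}\to-\infty$ along $\dot\sigma$; since $\operatorname{tr}_{g_i}\operatorname{Ric}(g_i)=-n(n-1)$ is fixed, another component must go to $+\infty$. No information about the full Hessian of $v_i$ on a scale-$r_i$ shell is needed, only pointwise first- and second-order information along the single curve $\sigma$, which the mean value theorem and the boundary expansion deliver for free, for \emph{arbitrary} increasing $\Omega_i$ and any $d$ including the borderline $d=(n-2)/2$.

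Your approach, by contrast, attempts to produce a direction $e$ tangential to the $\rho$-spheres in which the Hessian term in \eqref{Ricci cur in v Manifold-1} is itself positively divergent, by matching $v_i$ to a one-variable profile $\psi$ for a round-tube model. This is a much more demanding strategy. The main gap is exactly the one you name: the $\Omega_i$ are arbitrary, not round tubes, and nothing in the hypotheses or in Lemma \ref{lemme-convergence} forces $v_i$ to be asymptotically $O(n-d)$-symmetric; the barriers built from $\rho^{-\frac{n-2}{2}+\epsilon_0}$ give one-sided $C^0$ control of $u_i$ but say nothing at the Hessian level, and interior Schauder estimates cannot upgrade one-sided pointwise bounds to a sharp two-sided Hessian asymptotic. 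Your identity $(v_i)_{,ee}=\rho^{-1}|\nabla_g v_i|(1+o(1))$ for tangential $e$ is a consequence of the assumed radial symmetry of the profile and has no a priori justification for general $\Omega_i$. A second gap is the borderline case $d=\tfrac{n-2}{2}$ (which includes every $n=4$, $d=1$, for instance): your tangential mechanism has $\alpha=1$ and the leading term $\propto s^{2\alpha-2}$ does not diverge, so a genuinely new logarithmic analysis would be required, whereas the paper's argument needs none. A third, smaller issue is that for $S_g\ge0$ the term $v_i^2(R_{ee}-\tfrac{S_g}{2(n-1)})$ is not uniformly bounded; you are right that it is lower order than $s^{2\alpha-2}$ under your scaling, but the parenthetical ``which only strengthens the blow-up'' elides a real step. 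In short, your mechanism is a plausible heuristic for why $d\ge1$ matters (and nicely explains why $d=0$ is different), but as a proof it is incomplete precisely where the paper's choice of a maximizing point on a transversal curve renders all of those difficulties moot.
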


\begin{proof} Let  $u_i$ be the solution
of \eqref{eq-MEq} and \eqref{eq-MBoundary} in $\Omega_i$ and set $v_i=u_i^{-\frac{2}{n-2}}$.
Then,
$$g_{i}=u_i^{\frac{4}{n-2}}g=v_i^{-2}g.$$
By the solution of the Yamabe problem, we can assume the scalar curvature $S_g$
of $M$ is the constant $\lambda(M,[g])$.
Since $M$ is compact, we can take $\Lambda>0$ such that
$$|R_{ij}|\leq \Lambda g_{ij}.$$
We now discuss two cases according to the sign of $S_g$.

{\it Case 1.} We first consider the case $S_g\geq0$.
By Lemma \ref{lemme-convergence}, for any $m$,
$$u_i\rightarrow 0\quad\text{in }C^{m}_{\mathrm{loc}}( M \setminus\Gamma)
\text{ as }i\rightarrow\infty,$$
and hence
$$v_i \text{ diverges to $\infty$ locally uniformly in $M \setminus\Gamma$
 as $i\rightarrow\infty$}.$$
We now consider two subcases.

\smallskip

{\it Case 1.1.} $\Gamma$ is not totally geodesic.
For any $\epsilon>0$, there exist two points $p,q \in \Gamma$,
such that the length of the shortest geodesic $\sigma_{pq}$ connecting $p$ and $q$
is less than $\epsilon$ and $\sigma_{pq}\bigcap \Gamma=\{p, q\}$.
When $\epsilon$ is sufficiently small,
we can assume $q$ is located in a small neighborhood of $p$ covered by normal coordinates.
Without loss of generality, we assume $p=0$ and $q=Le_n$.

For $i$ large,
set $p_i=\widehat{t}_ie_n$ and $q_i=\widetilde{t}_ie_n$, where
\begin{align*}\widehat{t}_i&=\min \{t'\in[0,{L}/{2}] |te_n \in\Omega_i , \text{ for any } t\in (t',{L}/{2}]\},\\
\widetilde{t}_i&= \max \{t'\in[{L}/{2},L]|te_n \in\Omega_i , \text{ for any } t\in [{L}/{2},t')\}.\end{align*}
Then, $p_i, q_i\in \partial\Omega_i$. By the convergence of $\Omega_i$ to $M\setminus \Gamma$,
we have
$$p_i\rightarrow p,\quad q_i\rightarrow q.$$
By the polyhomogeneous expansions of $v_i$, we have
$$|\partial_n v_i(p_i)|\leq C_i \quad\text{and}\quad |\partial_n v_i(q_i)|\leq C_i,$$
where $C_i$ is some positive constant which converges to 1 as $i\rightarrow\infty$ and $\epsilon\rightarrow 0$.

Since $v_i(Le_n/2)\rightarrow\infty$
as $i\rightarrow\infty$, for $i$ large,
we can take $t_i \in (\widehat{t}_i,\widetilde{t}_i)$ such that, for any $t\in (\widehat{t}_i,\widetilde{t}_i)$,
$$\partial_nv_i(te_n)\leq \partial_nv_i(t_ie_n).$$
Then,
$$\partial_nv_i(t_ie_n)>\frac{v_i(\frac{L}{2}e_n)-0}{\frac{L}{2}}\geq \frac{2}{L}v_i(\frac{L}{2}e_n),$$
and $$\partial_{nn}v_i( t_ie_n)=0.$$
We also have \begin{equation}\label{eq-v_icontrol}|v_i( t_ie_n)|\leq \frac{L}{2}\partial_nv_i(t_ie_n).\end{equation}
Denote by $R^i_{nn}$ the Ricci curvature of $g_i$
acting on the unit vector $v_i \frac{\partial}{\partial x^n}$ with respect to the metric $g_i$.
By \eqref{Ricci cur  in v Manifold}, we have, at $t_ie_n$,
$$R^i_{nn}\le v_i^2|R_{nn}|-\frac{n-2}{2}(\partial_nv_i)^2\le
\Big[\frac{L^2}{4}|R_{nn}|-\frac{n-2}{2}\Big](\partial_nv_i)^2\to-\infty,$$
if $L$ is sufficiently small.
Hence, some component of the Ricci curvature of $g_i$
at the point $t_ie_n$ diverges to $\infty$ as $i\rightarrow\infty$.


\smallskip

{\it Case 1.2.} $\Gamma$ is totally geodesic.
Fix a point $x_0 \in \Gamma$ and choose normal coordinates near $x_0$
such that $x_{0}=0$ and
$\Gamma$ near $x_0$ is given by $x_i=0$, $i=1,..,n-d$.
Consider the curve $\sigma$ given by
$$\sigma(t)=(\sqrt{R^2-t^2}-\sqrt{R^2-\epsilon^2}, 0, \cdots, 0, t)\quad\text{for }t \in [-\epsilon,\epsilon],$$
where $R$ is some sufficiently large constant and
$\epsilon$ is some sufficiently small constant such that
$\sigma\bigcap \Gamma=\{ \sigma(-\epsilon), \sigma(\epsilon)\}$.

For $i$ large, set $p_i=\sigma(\widehat{t}_i)$ and $q_i=\sigma( \widetilde{t}_i)$,  where
\begin{align*}\widehat{t}_i&=\min \{t'\in[-\epsilon,0] |\sigma(t) \in \Omega_i,
\text{ for any } t\in ( t',0]\},\\
\widetilde{t}_i&=\max \{t'\in [0,\epsilon] |\sigma(t) \in \Omega_i, \text{ for any } t\in [0,t')\}.\end{align*}
Then, $p_i, q_i\in\partial\Omega_i$ and
$$p_i\rightarrow \sigma(-\epsilon),
\quad q_i\rightarrow \sigma(\epsilon).$$
By the polyhomogeneous expansion of $v_i$,
we have
$$|\partial_{n} v_i(\sigma(\widehat{t}_i))|\leq C_i\quad\text{and}\quad
|\partial_{n} v_i(\sigma(\widetilde{t}_i))|\leq C,$$
where $C$ is some positive bounded  constant independent of $i$.

Consider the single variable function $(v_i\circ\sigma)(t)$.
Since $(v_i\circ\sigma)(0)\rightarrow\infty$ as $i\rightarrow\infty$, for $i$ large,
we can take $t_i \in (\widehat{t}_i,\widetilde{t}_i)$ such that, for any $t\in (\widehat{t}_i,\widetilde{t}_i)$,
$$\partial_{t}(v_i\circ\sigma)(t)\leq \partial_{t} (v_i\circ\sigma)(t_i).$$
Then,
$$\partial_{t} (v_i\circ\sigma)(t_i)>\frac{1}{\epsilon} (v_i\circ\sigma)(0),$$
and
\begin{equation}\label{eq-second deriv}\partial_{tt} (v_i\circ\sigma)(t_i)=0.\end{equation}
We also have
\begin{equation}\label{eq-v_icontrol2}|(v_i\circ\sigma)(t_i)|\leq
\epsilon\partial_{t} (v_i\circ\sigma)(t_i).\end{equation}
Note that
$$\partial_{t}(v_i\circ\sigma)(t_i)=
(\partial_n v_i)(\sigma(t_i))-\frac{t_i}{\sqrt{R^2-t_i^2}}(\partial_1 v_i)(\sigma(t_i)).$$
Set
$$\nu_i=\frac{\partial}{\partial x_n}-\frac{t_i}{\sqrt{R^2-t_i^2}}\frac{\partial}{\partial x_1}.$$
By \eqref{eq-second deriv}, we have
$$( \partial_{\nu_i\nu_i}v_i)(\sigma(t_i))
=\Big(-\frac{1}{\sqrt{R^2-t_i^2}}+\frac{t_i^2}{(R^2-t_i^2)^{\frac{3}{2}}}\Big)\partial_1 v_i(\sigma(t_i)).$$
Hence, $( \partial_{\nu_i\nu_i}v_i)(\sigma(t_i))$ is sufficiently small compared with $( |\nabla v_i|)(\sigma(t_i))$,
for $R$ sufficiently large and
$\epsilon$ sufficiently small.
Write $g_{\nu_i\nu_i}=g (\nu_i, \nu_i)$ and
denote by $R^i_{\nu_i\nu_i}$ the Ricci curvature of $g_i$
acting on the unit vector $\frac{v_i\nu_i}{\sqrt{g_{\nu_i\nu_i}}}$ with respect to the metric $g_i$.
Similarly as in Case 1.1, we can verify at the point $\sigma(t_i)$,
$R^i_{\nu_i\nu_i}$
diverges to $ -\infty$ as $i\rightarrow\infty$. Hence,  some component of the Ricci curvature of $g_i$
at the point $\sigma(t_i)$ diverges to $\infty$ as $i\rightarrow\infty$.

\smallskip
{\it Case 2.} We now consider the case $S_g<0$.
By Lemma \ref{lemme-convergence},  for any $m$,
$$u_i\rightarrow \Big(\frac{-S_g}{n(n-1)}\Big)^{\frac{n-2}{4}}
\quad\text{in }C^{m}_{\mathrm{loc}}( M \setminus\Gamma)
\text{ as }i\rightarrow\infty,$$
and hence
\begin{equation}\label{eq-v convergence}
v_i \rightarrow
\Big(\frac{-S_g }{n(n-1)}\Big)^{-\frac{1}{2}} \quad\text{in }C^{m}_{\mathrm{loc}}( M \setminus\Gamma)
\text{ as }i\rightarrow\infty.\end{equation}
Fix a point $x_0 \in \Gamma$ and choose normal coordinates in a small neighborhood of
$x_{0}$ such that $x_{0}=0$
and $x_n$-axis is a normal geodesic of $\Gamma$ near $x_0$.
Take $\epsilon>0$ sufficiently small.
For $i$ large,
set $p_i=t_ie_n$, where
$$t_i=\min \{t'\in[0,\epsilon] |te_n \in \Omega_i, \text{ for any }t\in (t', \epsilon]\}.$$
Then, $p_i\in\partial\Omega_i$ and
$$p_i\rightarrow 0.$$
By the polyhomogeneous expansion of $v_i$, we have
$$|\partial_n v_i(p_i)|\leq C_i,$$ where $C_i$ is some positive constant which converges to 1 as $i\rightarrow\infty$.
By \eqref{eq-v convergence},
$$\frac{\partial v_i}{\partial x_n}(\epsilon e_n )\rightarrow0\quad\text{as }i\rightarrow\infty.$$
For $i$ large, we take $\widetilde{t}_i \in (t_i,\epsilon)$ such that, for any $t\in (t_i,\epsilon)$,
$$\partial_nv_i(te_n)\leq \partial_nv_i(\widetilde{t}_ie_n).$$
Then,
$$\partial_nv_i(\widetilde{t}_ie_n)>\frac{v_i(\epsilon e_n)-0}{\epsilon-t_i}>
\frac{1}{2}\epsilon^{-1}\Big(\frac{-S_g }{n(n-1)}\Big)^{-\frac{1}{2}},$$
and $$\partial_{nn}v_i(\widetilde{ t}_ie_n)=0.$$
Denote by $R^i_{nn}$ the Ricci curvature of $g_i$
acting on the unit vector $v_i \frac{\partial}{\partial x^n}$ with respect to the metric $g_i$.
Similarly, by \eqref{Ricci cur  in v Manifold} at the point $\widetilde{t}_ie_n$,
$R_{nn}^i\le -C\epsilon^{-2}$, for all large $i$, for some positive constant $C$
independent of $i$ and $\epsilon$. By choosing appropriate $\epsilon$, we have the desired result.
\end{proof}

Next, we discuss the case that the boundary is close to a point $x_0$. The proof of the next result is
rather delicate if the Yamabe invariant is positive, in which case expansions of the
Green's function play an essential role.
We need to employ the positive mass theorem if the manifold has a dimension 3, 4, or 5, or
is locally conformally flat.
In the case that $n\ge 6$ and $M$ is not conformally flat in a neighborhood of $x_0$, we need
to analyze Weyl tensors and distinguish two cases $W(x_0) \neq 0$ and $W(x_0)=0$.
The proof for the case $W(x_0)=0$ is quite delicate. It is worth to emphasize that the Weyl tensor can
be zero at $x_0$ even if $M$ is not conformally flat in a neighborhood of $x_0$.
Different vanishing orders of $W$ at $x_0$ requires different methods.
In fact, we also need to employ the positive mass theorem if
the Weyl tensor vanishes at $x_0$ up to a sufficiently high order.

\begin{theorem}\label{blow up-one point}
Let $(M, g)$ be a compact Riemannian manifold of dimension $n\geq 3$ without boundary, with
$\lambda(M,[g])< \lambda(S^n,[g_{S^n}])$,
where $S^n$ is the sphere with its standard metric $g_{S^n}$,
and let $x_0$ be a point in $M$. Suppose that $\Omega_i$ is a sequence of increasing domains
with smooth boundary in $M$ which converges to $M \setminus\
\{x_0\}$ and $g_{i}$ is the complete conformal metric in $\Omega_i$ with the scalar curvature
$-n(n-1)$. Then, for $i$ sufficiently large, $g_i$
has a positive Ricci curvature component somewhere in
$\Omega_{i}$. Moreover,
the maximal Ricci curvature of $g_i$ in $\Omega_{i}$ diverges to $\infty$ as $i\to\infty$.
\end{theorem}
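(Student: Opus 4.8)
The plan is to split the argument according to the sign of the Yamabe invariant $\lambda=\lambda(M,[g])$, which, by the solution of the Yamabe problem, we may take to be the constant scalar curvature $S_g$. \emph{The case $\lambda<0$.} By Lemma~\ref{lemme-convergence}, $v_i=u_i^{-2/(n-2)}$ converges in $C^m_{\mathrm{loc}}(M\setminus\{x_0\})$ to a positive constant, and one repeats Case~2 of the proof of Theorem~\ref{prop-general mainfold} essentially verbatim with $\Gamma=\{x_0\}$: fix a geodesic $\gamma$ with $\gamma(0)=x_0$; for $i$ large it meets $\partial\Omega_i$ at a point $\gamma(t_i)$ with $t_i\to0$ and $\gamma((t_i,\epsilon])\subset\Omega_i$, where $v_i=0$ and $|\nabla_g v_i|=1$ by \eqref{boundary expansion v-gradient}, while at the fixed point $\gamma(\epsilon)$ one has $v_i$ bounded and $\partial_t(v_i\circ\gamma)\to0$. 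Taking $\widetilde{t}_i\in(t_i,\epsilon)$ where $\partial_t(v_i\circ\gamma)$ is maximal forces $\widetilde{t}_i$ interior, so $\partial_{tt}(v_i\circ\gamma)(\widetilde{t}_i)=0$ and $\partial_t(v_i\circ\gamma)(\widetilde{t}_i)\ge c_0\epsilon^{-1}$ for a fixed $c_0>0$; substituting into \eqref{Ricci cur in v Manifold} shows that the Ricci curvature of $g_i$ on the $g_i$-unit vector along $\dot\gamma$ at $\gamma(\widetilde{t}_i)$ is at most $-C\epsilon^{-2}$, with $C$ independent of $i$ and $\epsilon$. Since the scalar curvature of $g_i$ is $-n(n-1)$, some other Ricci component there is at least $(C\epsilon^{-2}-n(n-1))/(n-1)$; letting $\epsilon\to0$ along the sequence---which is legitimate, since for each fixed small $\epsilon$ this holds for all large $i$---gives a positive Ricci component whose maximum over $\Omega_i$ diverges. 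Here $\lambda<\lambda(S^n,[g_{S^n}])$ is automatic.

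\emph{The case $0\le\lambda<\lambda(S^n,[g_{S^n}])$.} Now Lemma~\ref{lemme-convergence} gives $u_i\to0$, hence $v_i\to\infty$ in $C^m_{\mathrm{loc}}(M\setminus\{x_0\})$, and the crude argument breaks down: near $x_0$ the conformal factor $v_i$ behaves like a positive multiple of $r^2$, where $r=\mathrm{dist}(\cdot,x_0)$, and then the leading contributions in \eqref{Ricci cur in v Manifold} cancel, so the blow-up of $v_i$ alone does not produce large interior Ricci curvatures. The first step is a concentration analysis at $x_0$: one normalizes $u_i$ by a constant $c_i\to0$ and shows that $c_i^{-1}u_i\to G_{x_0}$ in $C^m_{\mathrm{loc}}(M\setminus\{x_0\})$, where $G_{x_0}$ is the Green's function of the conformal Laplacian $L_g$ with pole at $x_0$, together with a matching estimate $u_i=c_i G_{x_0}(1+o(1))$ on $B_\rho(x_0)\cap\Omega_i$ down to the scale on which $G_{x_0}\sim r^{2-n}$ dominates. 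The hypothesis $\lambda<\lambda(S^n,[g_{S^n}])$ enters precisely here, to exclude the degenerate scenario in which $g_i$, rescaled about $x_0$, converges to the hyperbolic metric---so that all Ricci curvatures tend to $-(n-1)$ and no positive component appears---equivalently, to guarantee that $c_i^{-1}u_i$ converges to $G_{x_0}$ rather than concentrating at $x_0$, a concentration which, via a Pohozaev identity, would force $(M,[g])$ to be conformal to the round sphere.

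The second step inserts the expansion of $G_{x_0}$ near $x_0$ in conformal normal coordinates. When $3\le n\le5$, or $M$ is locally conformally flat, or $M$ is spin, one has $G_{x_0}=r^{2-n}+A+O(r)$ with $A=A(x_0)>0$ by the positive mass theorem ($A$ vanishes only on the standard sphere, which is excluded by $\lambda<\lambda(S^n,[g_{S^n}])$); when $n\ge6$ and $M$ is not conformally flat near $x_0$, the expansion begins instead with an explicit term built from the Weyl tensor at $x_0$, whose sign is favorable when $W(x_0)\ne0$, and when $W$ vanishes at $x_0$ to sufficiently high order one again appeals to the positive mass theorem for the remaining term. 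In every case one obtains, on a fixed punctured ball about $x_0$ and with $C_i:=c_i^{-2/(n-2)}\to\infty$, an expansion $v_i=C_i(r^2-a\,r^{k}+\cdots)$ with $a=a(x_0)>0$ and $k\in\{6,n\}$ the relevant order. Substituting this---together with the conformal-normal-coordinate expansion of $g$ itself, which enters at the same order when $n\ge7$---into \eqref{Ricci cur in v Manifold} (or its Euclidean form \eqref{Ricci cur in v}), the $O(C_i^2 r^2)$ contributions cancel identically, consistent with the scalar curvature being exactly $-n(n-1)$, while the next-order contribution shows that a Ricci component of $g_i$ tangent to the geodesic spheres about $x_0$ equals a \emph{positive} multiple of $C_i^2 r^{k}$ plus lower-order terms. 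Evaluating at a fixed, sufficiently small radius $r=\rho$ makes this positive term dominate the remainders, and since $C_i\to\infty$ this component of the Ricci curvature of $g_i$ tends to $+\infty$; hence $g_i$ has a positive Ricci curvature component and the maximal Ricci curvature in $\Omega_i$ diverges, as claimed.

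The main obstacle is the first step of the second case: showing that $u_i$ does not concentrate but behaves asymptotically like $c_i G_{x_0}$---this is exactly where $\lambda(M,[g])<\lambda(S^n,[g_{S^n}])$ is indispensable---together with the delicate bookkeeping of the Green's function expansion when $n\ge6$, $W(x_0)=0$, and $M$ is not conformally flat near $x_0$, where the vanishing order of $W$ at $x_0$ determines whether the local expansion already yields a term of favorable sign or the positive mass theorem must be brought in.
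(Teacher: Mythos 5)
Your outline matches the paper's strategy for $\lambda(M,[g])<0$ and for $\lambda(M,[g])>0$: in the first case one adapts the elementary one-dimensional maximum argument of Theorem~\ref{prop-general mainfold} (Case~2), and in the second one normalizes $u_i$, passes to a Green's-function limit, and reads off a sign from the expansion of $G_{x_0}^{-2/(n-2)}$ using conformal normal coordinates, the Weyl-tensor dichotomy at $x_0$, and the positive mass theorem. The conclusion that a positive Ricci component appears because the radial one is driven to $-\infty$ while the scalar curvature is pinned at $-n(n-1)$ is also the paper's mechanism.

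The gap is in how you treat $\lambda(M,[g])=0$. You fold it together with $\lambda>0$ and argue via the Green's function of the conformal Laplacian, but when $\lambda(M,[g])=0$ that Green's function does not exist: after the Yamabe normalization $S_g\equiv 0$ one has $L_g=-\Delta_g$ on a closed manifold, whose kernel contains the positive constants, so $L_gG_{x_0}=(n-2)\omega_{n-1}\delta_{x_0}$ has no solution (and certainly no positive one). Your ``concentration analysis'' step $c_i^{-1}u_i\to G_{x_0}$ therefore cannot even be formulated in this case. The paper handles $\lambda=0$ by an entirely different argument: a Harnack inequality on $M\setminus B_r(x_0)$ gives $|\nabla_g v_i|\le Cv_i$ there, while the fact that $u_i$ nearly attains its minimum near $x_0$ forces $|\nabla_g v_i|$ to be much larger somewhere in $B_\delta(x_0)$, so the maximum of $|\nabla_g v_i|$ is attained at an interior point $p_i$ near $x_0$; at $p_i$ one has $\nabla_g|\nabla_g v_i|^2=0$, and substituting into \eqref{Ricci cur  in v Manifold} drives the Ricci component in the direction $\nabla_g v_i/|\nabla_g v_i|$ to $-\infty$. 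You need this (or some other) replacement argument for $\lambda=0$; the Green's-function route is not available.

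Two smaller points. First, you attribute $\lambda(M,[g])<\lambda(S^n,[g_{S^n}])$ to the non-concentration step (``so that $c_i^{-1}u_i$ converges to $G_{x_0}$ rather than concentrating''), but in the paper the convergence $\widetilde w_i\to 1$ holds whenever $\lambda>0$, by the Liouville-type argument for bounded $L_g$-harmonic functions with a removable singularity at $x_0$ (Proposition~9.1 of \cite{LiZhu1999}); the strict inequality is used only later, to guarantee via the positive mass theorem that the mass coefficient in the expansion of $G_{x_0}$ is strictly positive (it vanishes exactly on the round sphere). Second, you assert that a Ricci component tangent to geodesic spheres equals a positive multiple of $C_i^2r^k$; the paper instead shows directly that the radial component $R^i_{rr}$ diverges to $-\infty$ and infers a positive component from the fixed scalar curvature. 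Your tangential claim is plausible but not automatic: for instance when $n=5$ the error $v_i^2R_{22}=O(C_i^2r^5)$ (since in conformal normal coordinates one only has $R_{22}(x_0)=0$, not $R_{22,1}(x_0)=0$) is of the same order $r^n=r^5$ as the mass term, so the sign would require a finer estimate; the radial component is better behaved because $R_{11}$ vanishes to third order along the $x_1$-axis by the Lee--Parker normalization.
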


\begin{proof}
Let $u_i$ be the solution of \eqref{eq-MEq} and
\eqref{eq-MBoundary} in $\Omega_i$ and set $v_i=u_i^{-\frac{2}{n-2}}$.
Then,
$$g_{i}=u_i^{\frac{4}{n-2}}g=v_i^{-2}g.$$
We consider several cases according to the sign of the Yamabe invariant $\lambda(M,[g])$.

\smallskip
{\it Case 1.} We first consider $\lambda(M,[g])<0$.
We point out that the proof of Case 2 of Theorem \ref{prop-general mainfold} can be adapted
to yield the conclusion.

\smallskip
{\it Case 2.} Next, we consider $\lambda(M,[g])=0$. As in the proof of  Theorem \ref{prop-general mainfold},
we assume the scalar curvature of $M$ is 0 and
$$|R_{ij}|\leq \Lambda g_{ij}.$$
Let $\delta$ be some small positive constant such that
$\Lambda\delta<{1}/{10}$ and
there exist normal coordinates in $B_{\delta}(x_0 )$.

Take a sufficiently small $r>0$ with $r\leq \delta$.
Since $\Omega_i\rightarrow M \setminus\ \{x_0\}$, we have
$M \setminus\ B_{r}(x_0 )\subset\subset \Omega_i$ for $i$ large. For such $i$, by
the Harnack inequality, we have
$$\max u_i \leq C \min u_i
\quad\text{in }M \setminus\ B_{r}(x_0 ),$$
where $C$ is some positive constant depending only on $n$, $M$ and $r$.
Then for $i$ sufficiently large, by \eqref{u_i go to zero-plus}, we have
\begin{equation}\label{u_i grad estimate} |\nabla_g u_i| \leq C(u_i+ u_i^{\frac{n+2}{n-2}})\leq Cu_i
\quad\text{in }M \setminus\ B_{r}(x_0 ).\end{equation}
We denote by $m_i$ the minimum of $u_i$ in $\Omega_i$.
With the definition of $v_i$, we have, for $i$ sufficiently large,
\begin{equation}\label{v_i grad estimate} |\nabla_g v_i| \leq C v_i \leq C m_i^{-\frac{2}{n-2}}
\quad\text{in }M \setminus\ B_{r}(x_0 ),\end{equation}
where $C$ is some positive constant depending only on $n$, $M$ and $\delta$.
Set
$$A_i=\{x\in \Omega_i|\, u_i(x)<2m_i\}.$$
Then, for any fixed $ r>0$ with $r\leq \delta$ and any $i$ sufficiently large, we have $A_i\cap
B_{r}(x_0 ) \neq \emptyset.$ Otherwise, by the maximum principle, we have
$$u_i(x) \geq 2m_i-Cm_i^{\frac{n+2}{n-2}}\quad\text{in }A_i,$$
where $C$ is some constant depending only on $n$, $M$ and $r$.
Hence,  $$m_i \geq 2m_i-Cm_i^{\frac{n+2}{n-2}}.$$
Note that $m_i\rightarrow0$ as $i\rightarrow\infty$, which leads to a contradiction.

By $v_i=0$ on $\partial \Omega_i$,
we have, for any fixed $ r>0$ with $r\leq \delta$ and for any $i$ sufficiently large,
$$|\nabla_g v_i| \geq \frac{1}{r}(2m_i)^{-\frac{2}{n-2}}\quad\text{somewhere in }\Omega_i\cap  B_{r}(x_0 ).$$
Therefore, for $i$ sufficiently large, $|\nabla_g v_i|$ must assume its maximum at
$p_i \in \Omega_i\cap  B_{\delta}(x_0 ) $.
Write $\nu_i=\frac{\nabla_g v_i}{|\nabla_g v_i|}$ and
denote by $R^i_{\nu_i\nu_i}$ the Ricci curvature of $g_i$
acting on the unit vector $v_i\nu_i$ with respect to the metric $g_i$.
Then, we can proceed as in the proof of Theorem \ref{prop-general mainfold}
to verify  at the point $p_i$,
$R^i_{\nu_i\nu_i}$
diverges to $ -\infty$ as $i\rightarrow\infty$.
Hence, some component of the Ricci curvature of $g_i$
at the point $p_i$ diverges to $\infty$ as $i\rightarrow\infty$.

\smallskip
{\it Case 3.} We now consider the case $\lambda(M,[g])>0$.
In this case, there exists $G_{x_0}\in C^{\infty}(M \setminus\
\{x_0\})$, the  Green's function for the conformal Laplacian $L_g$, such that
$$L_gG_{x_0}=(n-2)\omega_{n-1}\delta_{x_0},\,\, G_{x_0}>0,$$
where $\omega_{n-1}$ is the volume of $S^{n-1}$.
Up to a conformal factor, we can assume $(M, g)$ has conformal normal coordinates
near $x_0$. See \cite{LeeParker1987} (Page 69)
or \cite{SchoenYau1994}, chapter 5.
We can perform a conformal blow up at $x_0$ to obtain an asymptotic flat and scalar flat manifold
by using $G_{x_0}$. Specifically, if we define the metric $\widetilde{g}
 = G_{x_0}^{\frac{4}{n-2}}g$ on $\widetilde{M} =M \setminus\
\{x_0\}$, then,  $(\widetilde{M}, \widetilde{g})$ is an asymptotically flat and scalar flat manifold,
and $\widetilde{g}$ has an asymptotic expansion near infinity.
See \cite{LeeParker1987} (Page 64-65), or
\cite{SchoenYau1994}, chapter 5.

Set $\widetilde{u}_i=u_i/ G_{x_0}$. Then,  $\widetilde{u}_i$ satisfies
\begin{align}
\label{eq u_i} \Delta_{\widetilde{g}} \widetilde{u}_{i}  &=
\frac14n(n-2) \widetilde{u}_{i}^{\frac{n+2}{n-2}} \quad\text{in }\,\Omega_i,\\
\label{eq-u i Boundary}\widetilde{u}_{i}&=\infty\quad\text{on }\partial \Omega_i,
\end{align}
and for any $m$,
\begin{equation}\label{u_i go to zero-a}
\widetilde{u}_i\rightarrow 0\quad\text{in }C^{m}_{\mathrm{loc}}( M \setminus\{x_0\})
\text{ as }i\rightarrow\infty.\end{equation}

Fix a point $p_0\in M \setminus\{x_0\}$. Then, $p_0 \in \Omega_i$, for $i$ sufficiently large.
Set $\widetilde{w}_i={\widetilde{u}_i}/{\widetilde{u}_i(p_0)}$. Then, $\widetilde{w}_i(p_0)=1$,
and $\widetilde{w}_i$ satisfies
\begin{align}
\label{eq w_i} \Delta_{\widetilde{g}} \widetilde{w}_{i}  &=
\frac14n(n-2)u_i(p_0)^{\frac{4}{n-2}} \widetilde{w}_{i}^{\frac{n+2}{n-2}} \quad\text{in }\,\Omega_i,\\
\label{eq-w i Boundary}\widetilde{w}_{i}&=\infty\quad\text{on }\partial \Omega_i.
\end{align}
By interior estimates, there exists a positive function $\widetilde{w}\in \widetilde{M}$ such that, for any $m$,
$$\widetilde{w}_i\rightarrow \widetilde{w}\quad\text{in }C^{m}_{\mathrm{loc}}( \widetilde{M})
\text{ as }i\rightarrow\infty,$$
and
\begin{equation}\Delta_{\widetilde{g}} \widetilde{w}=0  \quad\text{in  } \widetilde{M}.\end{equation}
Hence,
\begin{equation}L_g( G_{x_0}\widetilde{w})=0  \quad\text{in  } M \setminus\{x_0\}.\end{equation}
By the expansion of $G_{x_0}$ near $x_0$ and Proposition 9.1 in \cite{LiZhu1999}, we conclude that
$ \widetilde{w}$ converges to some constant as $x\to x_0$. Therefore,
$\widetilde{w}\equiv1$ in $\widetilde{M}$. Hence, for any $m$,
\begin{equation}\label{u_i behavioer2}\frac{u_i}{\widetilde{u}_i (p_0) G_{x_0}}\rightarrow 1
\quad\text{in }C^{m}_{\mathrm{loc}}( M \setminus\{x_0\})
\text{ as }i\rightarrow\infty.\end{equation}
In the following, we always discuss in the conformal normal coordinates near $x_0$.
Set
$$v_i= u_i^{-\frac{2}{n-2}}=
(\widetilde{u}_i (p_0))^{-\frac{2}{n-2}}
\Big(\frac{u_i}{\widetilde{u}_i (p_0) G_{x_0}}\Big)^{-\frac{2}{n-2}}G_{x_0}^{-\frac{2}{n-2}}.$$
We will fix a direction appropriately, which we call $x_1$.
Denote by $R^i_{11}$ the Ricci curvature of $g_i$
acting on the unit vector $v_i\frac{\partial}{\partial x_1}$ with respect to the metric $g_i$.
To study $R^i_{11}$ given by \eqref{Ricci cur  in v Manifold-1},
we need to analyze the expansion of $G_{x_0}^{-\frac{2}{n-2}}.$
See \cite{LeeParker1987} or \cite{SchoenYau1994} for details.

Now we discuss several cases.

\smallskip
{\it Case 3.1.} $n=3,4,5$, or $M$ is conformally flat in a neighborhood of $x_0$.
In this case, we have
$$G_{x_0}=r^{2-n}+A+O(r),$$
where $A$ is a constant. Since $\lambda(M,[g])< \lambda(S^n,[g_{S^n}])$, we have $A>0$ when $3\leq n \leq 7$,
or $M$ is locally conformally flat, or $M$ is spin. We also have $A>0$
when $M$ is just conformally flat in a neighborhood of $x_0$
under the assumption that the positive mass theorem holds.
Then,
\begin{align*}G_{x_0}^{-\frac{2}{n-2}}&=r^2-\frac{2}{n-2}Ar^n+O( r^{n+1}),\\
 \partial_rG_{x_0}^{-\frac{2}{n-2}}&=2r-\frac{2n}{n-2}Ar^{n-1}+O( r^{n}),\\
 \partial_{rr}G_{x_0}^{-\frac{2}{n-2}}&=2-\frac{2n(n-1)}{n-2}Ar^{n-2}+O( r^{n-1}),\end{align*}
and hence
\begin{align*}
G_{x_0}^{-\frac{2}{n-2}}  \partial_{rr}G_{x_0}^{-\frac{2}{n-2}}
-\frac12( \partial_r G_{x_0}^{-\frac{2}{n-2}})^2
=-2(n-1)Ar^n+ O( r^{n+1}).\end{align*}

For $n=3,4,5$,  by \cite{LeeParker1987} (Page 61),
$R_{11}(x_0)=0$, $R_{11,1}(x_0)=0$ and $R_{11,11}(x_0)\leq 0$, we have
$$R_{11}\leq C|x_1|^3\quad\text{on the $x_1$-axis near }x_0=0.$$
We also have $S_{g}(x_0)=0$ and $S_{g,1}(x_0)=0$, and hence
$$|S_g |\leq Cx_1^2\quad\text{on the $x_1$-axis near }x_0=0.$$
Take any $x_1>0$ small. Then, at the point $x_1 e_1$,
\begin{align}\label{eq-Ricci-order}v_i^2R_{11}\le C(\widetilde{u}_i(p_0))^{-\frac{4}{n-2}}x_1^7, \end{align}
and
\begin{align}\label{eq-scalar-order}v_i^2|S_g|\le C(\widetilde{u}_i(p_0))^{-\frac{4}{n-2}}x_1^6.\end{align}
For $i$ large, by \eqref{Ricci cur  in v Manifold-1}, we have,
at the point $x_1 e_1$,
\begin{align}\label{Ricci comp}
R^i_{11}\le (\widetilde{u}_i(p_0))^{-\frac{4}{n-2}}\big[-2(n-1)(n-2)Ax_1^n+Cx_1^6+o(1)\big],
\end{align}
where $o(1)$ denotes terms converging to zero as $i\to \infty$, uniformly for small $x_1$ away from 0.
The dominant term in \eqref{Ricci comp} is the $x_1^n$-term, with a negative coefficient. Hence,
the expression inside the bracket in \eqref{Ricci comp} is strictly less than 0,
for a fixed small $x_1\neq0$ and $i$ large.
Therefore, at the point $x_1e_1$, $R^i_{11}$
diverges to $ -\infty$ as $i\rightarrow\infty$. Hence, some component of the Ricci curvature of $g_i$
at the point $x_1e_1$ diverges to $\infty$ as $i\rightarrow\infty$.

If $M$ is conformally flat in a neighborhood of $x_0$,  then $R_{11}=0$ and $S_{g}=0$
on the $x_1$-axis and near $x_0=0$.
The $x_1^6$-term in  \eqref{Ricci comp} is absent.
Similarly, at the point $x_1e_1$ for $x_1>0$ sufficiently small,
$R^i_{11}$
diverges to $ -\infty$ as $i\rightarrow\infty$.

If we denote by $R^i_{rr}$ the Ricci curvature of $g_i$
acting on the unit vector $v_i\frac{\partial}{\partial r}$ with respect to the metric $g_i$, then
we conclude similarly that
$R^i_{rr}$ at $x$
diverges to $ -\infty$ as $i\rightarrow\infty$, for some $x$ sufficiently close to $x_0$.

\smallskip
{\it Case 3.2.} $n=6$ and $M$ is not conformally flat in a neighborhood of $x_0$.
In this case,
$$G_{x_0}(x)=r^{2-n}-\frac{n-2}{1152(n-1)} |W(x_0)|^2\log r -\frac{1}{96}S_{g,ij}(x_0)\frac{x^{i}x^{j}}{r^{2}}
+P(x)\log r +\alpha(x),$$
where $W$ is the Weyl tensor, $P(x)$ is a polynomial with $P(0)=0$, and $\alpha$ is a $C^{2,\mu}$-function.
We note that $W_{ijkl}$ is given by
\begin{align}\label{Weyl tensor}\begin{split}
W_{ijkl}  &=R_{ijkl}-\frac{1}{n-2}\big(R_{ik}g_{jl}-R_{il}g_{jk}+R_{jl}g_{ik}-R_{jk}g_{il}\big)\\
&\qquad+\frac{S_{g}}{(n-1)(n-2)}\big(g_{ik}g_{jl}-g_{il}g_{jk}\big).
\end{split}\end{align}

{\it Case 3.2.1.} If $W(x_0) \neq 0$,
then,
\begin{align*}G_{x_0}^{-\frac{2}{n-2}}&=r^2+\frac{1}{2880}|W(x_0)|^2r^6\log r + O(r^7\log r) ,\\
\partial_rG_{x_0}^{-\frac{2}{n-2}}&=2r+\frac{1}{480}|W(x_0)|^2r^5\log r+O( r^{5}),\\
\partial_{rr}G_{x_0}^{-\frac{2}{n-2}}&=2+\frac{1}{96}|W(x_0)|^2r^4\log r+O( r^{4}),\end{align*}
and hence
\begin{align*}
G_{x_0}^{-\frac{2}{n-2}}  \partial_{rr}G_{x_0}^{-\frac{2}{n-2}}
-\frac12( \partial_r G_{x_0}^{-\frac{2}{n-2}})^2
=\frac{1}{144}|W(x_0)|^2r^6 \log r+ O( r^{6}).\end{align*}
Take any $x_1>0$ small. Then, at the point $x_1 e_1$,
\eqref{eq-Ricci-order} and \eqref{eq-scalar-order} still hold.
For $i$ large, instead of  \eqref{Ricci comp}, we have,
at the point $x_1 e_1$,
\begin{align*}
R^i_{11}\le (\widetilde{u}_i(p_0))^{-1}\big[\frac{1}{36}|W(x_0)|^2x_1^6 \log x_1+Cx_1^6+o(1)\big].
\end{align*}
Similarly as in Case 3.1, at the point $x_1e_1$ for $x_1>0$ sufficiently small,
$R^i_{11}$
diverges to $ -\infty$ as $i\rightarrow\infty$.

Similarly, $R^i_{rr}$ at $x$
diverges to $ -\infty$ as $i\rightarrow\infty$, for some $x$ sufficiently close to $x_0$.
\smallskip

{\it Case 3.2.2.} We now consider the case $W(x_0)=0$. By \eqref{Weyl tensor},
we have $R_{ijkl}(x_0)=0$. Hence, $(\widetilde{M}, \widetilde{g})$ is asymptotically flat
of order 3.
Using the spherical coordinates, we set
$$\phi_4(\theta)=\frac{1}{96}S_{g,ij}(x_0)\frac{x^{i}x^{j}}{r^{2}},$$
and denote by $r^2g_2(\theta )$ the degree two part of  the Taylor expansion of $S_{g}$ at $x_0$.
Since
$$\sum_{i=1}^{n}S_{g,ii}(x_0)=-\frac{1}{6}|W|^2(x_0)=0,$$
then, $$\int_{S^{n-1}}g_2(\theta)d\theta=0.$$
By the positive mass theorem, see \cite{LeeParker1987} (Page 79, 80), we have
$$\int_{S^{n-1}}\big(\phi_4(\theta)+\alpha(0)\big)d\theta>0.$$
Along a radial geodesic $\{(r,\theta):\,0\le r\le \delta\}$, for a small constant $\delta$, we have
\begin{align*}G_{x_0}^{-\frac{2}{n-2}}&=r^2-\frac{1}{2}\big(\phi_4(\theta)+\alpha(0) \big) r^6 +o( r^{6}),\\
 \partial_rG_{x_0}^{-\frac{2}{n-2}}&=2r-3\big(\phi_4(\theta)+\alpha(0) \big) r^{5}+o( r^{5}),\\
 \partial_{rr}G_{x_0}^{-\frac{2}{n-2}}&=2-15\big(\phi_4(\theta)+\alpha(0) \big) r^{4}+o( r^{4}),\end{align*}
and hence
\begin{align*}
G_{x_0}^{-\frac{2}{n-2}}  \partial_{rr}G_{x_0}^{-\frac{2}{n-2}}
-\frac12( \partial_r G_{x_0}^{-\frac{2}{n-2}})^2
=-10\big(\phi_4(\theta)+\alpha(0) \big)  r^n+ o( r^{n}).\end{align*}
By \cite{LeeParker1987} (Page 61), along the radial geodesic $(\cdot,\theta)$,
$\partial_r^3R_{rr}(x_0)=0$ and $\partial_r^4R_{rr}(x_0)\leq 0$. Therefore,
for $i$ large, by \eqref{Ricci cur  in v Manifold-1}, we have,
along the radial geodesic $(\cdot,\theta)$,
\begin{align*}
R^i_{rr}|_{(r,\theta)}\le (\widetilde{u}_i(p_0))^{-1}\big[-\frac{1}{10}g_2(\theta)r^6
-40\big(\phi_4(\theta)+\alpha(0) \big)r^6 +o(r^6)+o(1)\big],
\end{align*}
where $o(1)$ denotes terms converging to zero as $i\to \infty$, uniformly for small $x$ away from 0. Hence,
\begin{align*}
\int_{S^{n-1}}R^i_{rr}d\theta\le (\widetilde{u}_i(p_0))^{-1}
\Big[-40r^6\int_{S^{n-1}}\big(\phi_4(\theta)+\alpha(0) \big)d\theta
+o(r^6)+o(1)\Big].
\end{align*}
Therefore, we can find $\theta_0\in S^{n-1}$ that
$R^i_{rr}$ at $(r,\theta_0)$
diverges to $ -\infty$ as $i\rightarrow\infty$, for some $r$ sufficiently small.
\smallskip

{\it Case 3.3.} $n\geq7$ and $M$ is not conformally flat in a neighborhood of $x_0$.
In this case,
\begin{align*}G_{x_0}(x)=r^{2-n}\Big[1+\sum_{i=4}^{n}\psi_{i}\Big]+c\log r+P(x)\log r+\alpha(x),\end{align*}
where $\psi_{i}$ is a homogeneous polynomial of degree $i$, $c$ is a constant,
$P(x)$ is a polynomial with $P(0)=0$, and $\alpha$ is a  $C^{2,\mu}$-function. We note that
$c=0$ and $P\equiv 0$
if $n$ is odd.
Moreover,
$$\psi_4(x)=\frac{n-2}{48(n-1)(n-4)}\Big(\frac{r^4}{12(n-6)} |W(x_0)|^2-S_{g,ij}(x_0)x^i x^jr^2\Big),$$
where $W$ is the Weyl tensor.

{\it Case 3.3.1.} First, we consider the case $|W(x_0)| \neq 0$. Note that $S_{g}(x_0)=0$, $\nabla_gS_{g}(x_0)=0$,
and $$\Delta_{g}S_{g}(x_0)=-\frac{1}{6}|W(x_0)|^2.$$
Without loss of generality, we assume $S_{g,11}(x_0)<0$.
Take any $x_1>0$ small. Then, at the point $x_1 e_1$,
\eqref{eq-Ricci-order} still holds.
Set
$$A=\frac{n-2}{48(n-1)(n-4)}\Big[\frac{1}{12(n-6)} |W(x_0)|^2
-S_{g,11}(x_0)\Big].$$
Then, on the positive $x_1$-axis near $x_0=0$, we have
\begin{align*}G_{x_0}^{-\frac{2}{n-2}}&=x_1^2-\frac{2}{n-2}Ax_1^6 +O(x_1^7) ,\\
 \partial_{x_1}G_{x_0}^{-\frac{2}{n-2}}&=2x_1-\frac{12}{n-2}Ax_1^5 +O(x_1^6),\\
 \partial_{x_1x_1}G_{x_0}^{-\frac{2}{n-2}}&=2-\frac{60}{n-2}Ax_1^4 +O(x_1^5) ,\end{align*}
and
\begin{align*}
&(n-2)[G_{x_0}^{-\frac{2}{n-2}}  \partial_{x_1x_1}G_{x_0}^{-\frac{2}{n-2}}
-\frac12( \partial_{x_1} G_{x_0}^{-\frac{2}{n-2}})^2]-\frac{1}{2(n-1)}S_{g} G_{x_0}^{-\frac{4}{n-2}}\\
&\quad  =-40Ax_1^6
-\frac{1}{4(n-1)}S_{g,11}(x_0)x_1^6+O(x_1^7) .\end{align*}
By the definition of $A$, we obtain
\begin{align*}
&(n-2)[G_{x_0}^{-\frac{2}{n-2}}  \partial_{x_1x_1}G_{x_0}^{-\frac{2}{n-2}}
-\frac12( \partial_{x_1} G_{x_0}^{-\frac{2}{n-2}})^2]-\frac{1}{2(n-1)}S_{g} G_{x_0}^{-\frac{4}{n-2}}\\
&\quad  =-\frac{1}{12(n-1)(n-4)}\Big[\frac{5(n-2)}{6(n-6)}|W(x_0)|^2
-(7n-8)S_{g,11}(x_0)\Big]x_1^6+O(x_1^7).\end{align*}
For $i$ large, instead of  \eqref{Ricci comp}, we have,
at the point $x_1 e_1$,
\begin{align*}
R^i_{11}\le (\widetilde{u}_i(p_0))^{-\frac{4}{n-2}}\big[-Bx_1^6+Cx_1^7+o(1)\big],
\end{align*}
for some positive constant $B$.
Then, we conclude
${R}_{11}^i$
at the point $x_1e_1$
diverges to $ -\infty$ as $i\rightarrow\infty$, for $x_1>0$ sufficiently small.

{\it Case 3.3.2.} We now consider the case $W(x_0)=0$. By \eqref{Weyl tensor}, we have $R_{ijkl}(x_0)=0$.
Using the spherical coordinates, we set
$$\psi_i=r^i\phi_i(\theta),$$
and denote by $r^ig_i(\theta )$ the  $i$-th Taylor expansion of $S_{g}$ at $x_0$.
Let $r^lg_l(\theta )$ be the first nonzero term in the Taylor expansion of $S_{g}$ at $x_0$.

{\it Subcase 3.3.2(a).} $2\leq l \leq n-5$.
By \cite{LeeParker1987} or \cite{SchoenYau1994}, we have $\psi_{i}=0$, $i=4,...,l-1$, and
$$\mathcal{L}\psi_{l+2} =-\frac{n-2}{4(n-1)} r^{l+2}g_l(\theta), $$
where
$$\mathcal{L}=-r^2\Delta+2(n-2)r\partial_r.$$
Here, $\Delta$ is the standard Laplacian on the Euclidean space, i.e.,
$$\Delta=\frac{\partial^2}{\partial r^2}+\frac{n-1}{r}\frac{\partial}{\partial r}+\frac{1}{r^2}\Delta_{S^{n-1}}.$$
Then, we have
 $$(l+2)(n-4-l)\int_{S^{n-1}}\psi_{l+2}=\int_{S^{n-1}}\mathcal{L}\psi_{l+2} =-\frac{n-2}{4(n-1)} \int_{S^{n-1}}r^{l+2}g_l(\theta).$$
Hence,
\begin{equation}\label{psi-g}\int_{S^{n-1}}\phi_{l+2}=-\frac{n-2}{4(n-1)(l+2)(n-4-l)}\int_{S^{n-1}}g_l(\theta).\end{equation}
We also have
\begin{align}\label{intergral indentity1}\begin{split}
\int_{S^{n-1}}g_l(\theta)&=\frac{r^{2-n-l}}{l}\int_{0}^{r}\int_{S^{n-1}}\Delta\big(s^l g_l(\theta)\big)s^{n-1}dr d\theta \\
&=\lim_{r\rightarrow0}\frac{r^{2-n-l}}{l}\int_{B_{r}(x_0)}\Delta_{g}S_{g}dV_{g}\\
&=\lim_{r\rightarrow0}\frac{-r^{2-n-l}}{6l}\int_{B_{r}(x_0)}|W|^2dV_{g}\leq 0.
\end{split}\end{align}
Note that the sign of $\lim_{r\rightarrow0}r^{2-n-l}\int_{B_{r}(x_0)}|W|^2dV_{g}$ is independent of $g\in[g]$.
Hence, if for some $ i$ with $2\leq i \leq n-5$,
$$0<\lim_{r\rightarrow0}r^{2-n-i}\int_{B_{r}(x_0)}|W|^2dV_{g}<\infty,$$
then the $i$-th Taylor expansion of $S_{g}$ at $x_0$ must not be identical to zero.

Note
\begin{align*}G_{x_0}^{-\frac{2}{n-2}}&=r^2-\frac{2}{n-2} r^{l+4}\phi_{l+2} +o(  r^{l+4}),\\
 \partial_rG_{x_0}^{-\frac{2}{n-2}}&=2r-\frac{2}{n-2}(l+4) r^{l+3}\phi_{l+2} +o(  r^{l+3}),\\
 \partial_{rr}G_{x_0}^{-\frac{2}{n-2}}&=2-\frac{2}{n-2}(l+4)(l+3)r^{l+2}\phi_{l+2}+o(  r^{l+2}),\end{align*}
and hence,
\begin{align*}
&(n-2)[G_{x_0}^{-\frac{2}{n-2}}  \partial_{x_1x_1}G_{x_0}^{-\frac{2}{n-2}}
-\frac12( \partial_{x_1} G_{x_0}^{-\frac{2}{n-2}})^2]-\frac{1}{2(n-1)}S_{g} G_{x_0}^{-\frac{4}{n-2}}\\
&\quad  =\Big[-2(l+2)(l+3)\phi_{l+2}(\theta)-\frac{1}{2(n-1)}g_{l}(\theta) \Big]r^{l+4}+o(r^{l+4}).\end{align*}

By \cite{LeeParker1987} (Page 61), along a radial geodesic $(\cdot,\theta)$,
$\partial^i_rR_{rr}(x_0)=0$, $i=1,...,l-1$, and $\partial^l_rR_{rr}(x_0)\leq 0$.
Therefore,
for $i$ large, by \eqref{Ricci cur  in v Manifold-1}, we have,
along a radial geodesic $(\cdot,\theta)$,
\begin{align}\label{Ricci comp3}\begin{split}
R^i_{rr}|_{(r,\theta)}&\le (\widetilde{u}_i(p_0))^{-\frac{4}{n-2}}\Big\{ \Big[-2(l+2)(l+3)\phi_{l+2} (\theta)
-\frac{1}{2(n-1)}g_{l}(\theta) \Big]r^{l+4}\\
&\qquad+o(r^{l+4})+o(1)\Big\},
\end{split}\end{align}
where $o(1)$ denotes terms converging to zero as $i\to \infty$, uniformly for small $x$ away from 0.
By \eqref{psi-g} and \eqref{intergral indentity1}, we have
\begin{align*}&-2(l+2)(l+3)\int_{S^{n-1}}\phi_{l+2}(\theta)d\theta-\frac{1}{2(n-1)}\int_{S^{n-1}}g_{l}(\theta)d\theta\\
&\qquad=\frac{(n-2)(l+3)-(n-4-l)}{2(n-1)(n-4-l)}\int_{S^{n-1}}g_l(\theta) d\theta
\leq 0.
\end{align*}
By \cite{LeeParker1987} Lemma 5.3 or \cite{SchoenYau1994} chapter 5,
$$2(l+2)(l+3)\phi_{l+2} \neq \frac{1}{2(n-1)}g_{l}.$$
Hence, we can find $\theta_0 \in S^{n-1}$ such that
$$-2(l+2)(l+3)\phi_{l+2} (\theta_0)-\frac{1}{2(n-1)}g_{l}(\theta_0) \leq -\epsilon_0,$$
for some positive constant $\epsilon_0$.
Therefore, along the radial geodesic $(r,\theta_0)$,
\begin{align*}
R^i_{rr}|_{(r,\theta_0)}\le& (\widetilde{u}_i(p_0))^{-\frac{4}{n-2}}\Big[-\epsilon_0r^{l+4}+o(r^{l+4})+o(1)\Big].
\end{align*}
Then, we conclude
${R}_{rr}^i$
at the point $(r,\theta_0)$
diverges to $ -\infty$ as $i\rightarrow\infty$, for $r>0$ sufficiently small.
\smallskip

{\it Subase 3.3.2(b).} $l\geq n-4$.
When $n$ is even, we have
$$\mathcal{L}(\psi_{n-2}+cr^{n-2} \log r)=\mathcal{L}\psi_{n-2}-(n-2)cr^{n-2}  =-\frac{n-2}{4(n-1)} r^{n-2}g_{n-4}(\theta).$$
Then,
\begin{align}\label{intergral indentity2}\begin{split}
(n-2)c w_{n-1} &=-r^{2-n}\int_{S^{n-1}}\mathcal{L}(\psi_{n-2}+cr^{n-2} \log r)\\
&=\frac{(n-2)r^{6-2n}}{4(n-1)(n-4)}\int_{0}^{r}\int_{S^{n-1}}\Delta\big(s^{n-4}g_l(\theta)\big)s^{n-1}dr d\theta \\
&=\lim_{r\rightarrow0}\frac{(n-2)r^{6-2n}}{4(n-1)(n-4)}\int_{B_{r}(x_0)}\Delta_{g}S_{g}dV_{g}\\
&=\lim_{r\rightarrow0}\frac{-(n-2)r^{6-2n}}{24(n-1)(n-4)}\int_{B_{r}(x_0)}|W|^2dV_{g}\leq 0.
\end{split}\end{align}
We note $$\lim_{r\rightarrow0}r^{6-2n}\int_{B_{r}(x_0)}|W|^2dV_{g}=0,$$
when $n$ is odd, since $\int_{S^{n-1}}g_{n-2}(\theta) d\theta=0$ when $n$ is odd.

If $c>0$, we can proceed as the proof of
Case 3.2.1, $n=6$ and $|W(x_0)|\neq 0$, and conclude that
$R^i_{rr}$ at $x$
diverges to $ -\infty$ as $i\rightarrow\infty$, for some $x$ sufficiently close to $x_0$.

In general, we first consider the case that
there exist a pair $(i,j)\in\{1\cdots n\}\times\{1\cdots n\}$ and a constant $k<[\frac{n-4}{2}]$
such that $R_{ij}\neq0$ and $k$ is the order of the first nonzero term in the Taylor expansion of $R_{ij}$ at $x_0$.

Without loss of generality, we assume the order of the first nonzero term in the Taylor expansion
of some $ R_{pq}$ at $x_0$ is $k$, $k<[\frac{n-4}{2}]$, and all other $R_{ij}$
vanish up to order $k$ at $x_0$.  Then, by \eqref{Weyl tensor}, all $R_{ijkl}$ vanish up to order $k$, and hence,
all $g_{ij}-\delta_{ij}$ vanish up to order $k+2$. By a rotation, we can assume
$$\frac{\partial^k}{\partial x_1^k}R_{pq}|_{x_0}\neq0.$$
By \cite{LeeParker1987} (Page 61),  $(p,q)\neq (1,1)$.
If $p \neq1$ and $q \neq 1$, by a rotation, we can assume $p=q=2$. Otherwise, we can assume $(p,q)=(1,2)$.

We consider the case $(p,q)=(2,2)$. By the Gauss Lemma, we have
$$x_j=\sum_{i=1}^n g_{ji}x_i.$$
Then, we have, on the $x_1$-axis near $x_0=0$,
\begin{align}\label{g12g22}1&=x_1\frac{\partial}{\partial x_2 } g_{12}+g_{22},\\
\label{g12g11}0&=x_1\frac{\partial}{\partial x_2 } g_{11}+g_{12},\end{align}
and
$$0=x_1\frac{\partial^2 }{\partial x_2^2}g_{22}+2\frac{\partial }{\partial x_2}g_{12}.$$
We also have, at $x_0=0$,
$$(k+2)\frac{\partial^{k+2} }{\partial x_1^{k+1} \partial x_2}g_{21}+\frac{\partial^{k+2} }{\partial x_1^{k+2} }g_{22}=0,$$
and
$$(k+1)\frac{\partial^{k+2} }{\partial x_1^{k} \partial^2 x_2}g_{11}
+2\frac{\partial^{k+2} }{ \partial x_1^{k+1} \partial x_2}g_{12}=0.$$
Hence, we have, at $x_0$,
\begin{align*}
\frac{\partial^{k} }{\partial x_1^{k} }R_{1212}&=\frac{1}{2}
\Big( 2\frac{\partial^{k+2} }{\partial x_1^{k+1} \partial x_2}g_{21}- \frac{\partial^{k+2} }{\partial x_1^{k+2} }g_{22}
- \frac{\partial^{k+2}}{\partial x_1^{k} \partial^2 x_2}g_{11}\Big)\\
&=-\frac{k+3}{k+1}\frac{\partial^{k+2} }{\partial x_1^{k+2} }g_{22}(x_0).
\end{align*}
Therefore, by \eqref{Weyl tensor}, we have
\begin{equation}\label{r22 expansion}\frac{\partial^{k} }{\partial x_1^{k} }|_{x_0}R_{22}
=(n-2)\frac{\partial^{k} }{\partial x_1^{k} }|_{x_0}R_{1212}
=-(n-2)\frac{k+3}{k+1}\frac{\partial^{k+2} }{\partial x_1^{k+2} }g_{22}(x_0)<0.\end{equation}
Then, for $i$ large, by \eqref{Ricci cur  in v Manifold-1}, we have,
at the point $x_1 e_1$,
\begin{align*}
R^i_{22}= (\widetilde{u}_i(p_0))^{-\frac{4}{n-2}}&\Big[\frac{1}{k!}\frac{\partial^{k}}{\partial x_1^{k} }|_{x_0}R_{22}x_1^{k+4}
+(n-2)\Big(-2\Gamma_{22}^1 x_1^3-2(g_{22}-1)x_1^2\Big)\\
&\quad+O(x_1^{k+5} )+o(1)\Big],
\end{align*}
where $o(1)$ denotes terms converging to zero as $i\to \infty$, uniformly for small $x_1$ away from 0.
At the point $x_1 e_1$,
$$\Gamma_{22}^1 =\frac{1}{2}\Big(2\frac{\partial}{\partial x_2}g_{12}-\frac{\partial}{\partial x_1}g_{22}\Big).$$
Combining with \eqref{g12g22}, we get, at the point $x_1 e_1$,
\begin{align*}
R^i_{22}&= (\widetilde{u}_i(p_0))^{-\frac{4}{n-2}}\Big[\frac{1}{k!}\frac{\partial^{k}}{\partial x_1^{k} }|_{x_0}R_{22}x_1^{k+4}
+(n-2)\frac{\partial}{\partial x_1}g_{22}x_1^3+O(x_1^{k+5} )+o(1)\Big]\\
&=(\widetilde{u}_i(p_0))^{-\frac{4}{n-2}}\Big[-(n-2)\frac{k+2}{(k+1)!}
\frac{\partial^{k+2} }{\partial x_1^{k+2} }g_{22}(x_0)x_1^{k+4}+O(x_1^{k+5} )+o(1)\Big].
\end{align*}
Then, we conclude
${R}_{22}^i$
at the point $x_1e_1$
diverges to $ -\infty$ as $i\rightarrow\infty$, for $x_1>0$ sufficiently small.

If $(p,q)=(1,2)$, we can argue similarly to conclude that
$|{R}_{12}^i|$ at the point $x_1e_1$
diverges to $ \infty$ as $i\rightarrow\infty$, for $x_1>0$ sufficiently small.

\smallskip
We now consider the case that the order of the first nonzero term in the Taylor expansion
of all $R_{pq}$ is greater or equal to $[\frac{n-4}{2}]$ at $x_0$, and
 $$\lim_{r\rightarrow0}r^{6-2n}\int_{B_{r}(x_0)}|W|^2dV_{g}=0.$$
Then, by \eqref{Weyl tensor}, the order of the first nonzero term in the Taylor expansion of
$R_{ijkl}$ at $x_0$ is greater or equal to $[\frac{n-4}{2}]$,
and hence,  the order of the first nonzero term in the Taylor expansion of $g_{ij}-\delta_{ij}$
at $x_0$ is greater or equal to $[\frac{n-4}{2}]+2$.
Hence, $(\widetilde{M}, \widetilde{g})$ is asymptotically flat
of order $[\frac{n-4}{2}]+2$. Thus, the ADM-mass of $(\widetilde{M}, \widetilde{g})$ is well defined.
By the positive mass theorem, we have $$\int_{S^{n-1}}\big(\phi_{n-2}(\theta)+\alpha(0)\big)d\theta>0.$$
Then,  we can proceed as in the proof of
Case 2.2, $n=6$ and $|W(x_0)|= 0$, and find $\theta_0\in S^{n-1}$ such that
$R^i_{rr}$ at $(r,\theta_0)$
diverges to $ -\infty$ as $i\rightarrow\infty$, for some $r$ sufficiently small.
\end{proof}

\begin{remark}\label{unknown} We point out that we used the positive mass theorem in the proof of
Theorem \ref{blow up-one point} if the Yamabe invariant of $(M,[g])$ is
between zero and that of the standard sphere and one of the following conditions holds:
(1) $M$
is locally conformally flat, (2) $3\le n\le 5$, or (3) for $n\ge 6$, the Weyl tensor $W$ at $x_0$ satisfies
$$\nabla^i|W|^2(x_0)= 0\quad\text{for any } i=0, \cdots, n-6.$$
\end{remark}

\begin{remark}\label{rmk-single-point-sphere}
The blow-up phenomena in Theorem \ref{blow up-one point} are
significantly different from those for the case that the underlying manifold is $S^n$.
For example, take $\Omega=S^n\setminus B_{r}(e_n)$, where $B_{r}(e_n)$
is a small ball on $S^n$ centered at the north pole.  Then,
$\Omega$ is close to $ S^n\backslash \{e_n\}$
and the complete conformal metric $g_\Omega$ in $\Omega$ with the constant scalar curvature
$-n(n-1)$ has a constant sectional curvature $-1$! This can be verified
by the stereographic projection, as the image of $S^n\setminus B_{r}(e_n)$ under
the stereographic projection from the north pole is a ball in $\mathbb R^n$ centered at the origin.
\end{remark}

We note that Theorem \ref{them-rigidity} follows easily from
Theorem \ref{blow up-one point}.
Now we are ready to prove Theorem \ref{thrm-large-positive-Ricci}.

\begin{proof}[Proof of Theorem \ref{thrm-large-positive-Ricci}]
Let  $u_i$ be the solution
of \eqref{eq-MEq} and \eqref{eq-MBoundary} in $\Omega_i$.
Then,
$g_{i}=u_i^{\frac{4}{n-2}}g.$

The proof of Theorem \ref{prop-general mainfold}  can be adapted to prove Case 1, i.e.,
$\Gamma$ contains a submanifold of dimension $j$, with $1\le j\leq \frac{n-2}{2}$.

Next, we consider Case 2, i.e., $(M, g)$  is not conformally equivalent to the standard sphere $S^n$
and $\Gamma$ consists of finitely many points.
If $\lambda(M,[g])\leq 0$,
the proof of Theorem \ref{prop-general mainfold} and
Theorem \ref{blow up-one point} can be adapted
to yield the desired conclusion.
Hence, we only need to discuss the case $\lambda(M,[g])> 0$ and
$\Gamma$ consists of finitely many points $\{p_1, \cdots, p_k\}$.

Let $G_{p_j}\in C^{\infty}(M \setminus\
\{p_j\})$ be  the Green's function for the conformal Laplacian $L_g$
with the pole at $p_j$, $j=1, \cdots, k$, respectively; namely,
\begin{equation*}L_gG_{p_j}=(n-2)\omega_{n-1}\delta_{p_j},\,\, G_{p_j}>0,\end{equation*}
where $\omega_{n-1}$ is the volume of $S^{n-1}$.
Up to conformal factors, we assume $(M, g)$ has conformal normal coordinates in
small neighborhoods of $p_i$.
Consider the metric
$$\widetilde{g}
 = \big(G_{p_1}+\cdots+G_{p_k}\big)^{\frac{4}{n-2}}g\quad\text{on }\widetilde{M} =M \setminus\
\{p_1, \cdots, p_k\}.$$
Then,  $(\widetilde{M}, \widetilde{g})$ is an asymptotically flat and scalar flat manifold,
and $\widetilde{g}$ has an asymptotic expansion near infinity.

Set $u_i= (G_{p_1}+\cdots+G_{p_k})\widetilde{u}_i$. Then,  $\widetilde{u}_i$ satisfies
\begin{align*}
\Delta_{\widetilde{g}} \widetilde{u}_{i}  &=
\frac14n(n-2) \widetilde{u}_{i}^{\frac{n+2}{n-2}} \quad\text{in }\,\Omega_i,\\
\widetilde{u}_{i}&=\infty\quad\text{on }\partial \Omega_i,
\end{align*}
and, for any $m$,
\begin{equation*}
\widetilde{u}_i\rightarrow 0\quad\text{in }C^{m}_{\mathrm{loc}}( M \setminus\{p_1, \cdots, p_k\})
\text{ as }i\rightarrow\infty.\end{equation*}
Fix a point $p_0\in M \setminus\{p_1, \cdots, p_k\}$. Then, for $i$ sufficiently large, $p_0 \in \Omega_i$.
Set $\widetilde{w}_i={\widetilde{u}_i}/{\widetilde{u}_i(p_0)}$.
Then, $\widetilde{w}_i(p_0)=1$ and $\widetilde{w}_i$ satisfies
\begin{align*}
\Delta_{\widetilde{g}} \widetilde{w}_{i}
&= \frac14n(n-2)u_i(p_0)^{\frac{4}{n-2}} \widetilde{w}_{i}^{\frac{n+2}{n-2}} \quad\text{in }\,\Omega_i,\\
\widetilde{w}_{i}&=\infty\quad\text{on }\partial \Omega_i.
\end{align*}
By interior estimates, there exists a positive function $\widetilde{w}\in \widetilde{M}$ such that, for any $m$,
$$\widetilde{w}_i\rightarrow \widetilde{w}\quad\text{in }C^{m}_{\mathrm{loc}}( \widetilde{M})
\text{ as }i\rightarrow\infty,$$
and
\begin{equation*}\Delta_{\widetilde{g}} \widetilde{w}=0  \quad\text{in  } \widetilde{M}.\end{equation*}
Hence,
\begin{equation*}L_g\big( (G_{p_1}+\cdots+G_{p_k})\widetilde{w}\big)=0
\quad\text{in  } M \setminus\{p_1, \cdots, p_k\}.\end{equation*}
By the expansions of $G_{p_j}$ near $p_j$, $j=1, \cdots, k$,
respectively, and Proposition 9.1 in \cite{LiZhu1999}, we conclude that
$ \widetilde{w}$ converges to some constant $\alpha_j$ as $x\to p_j$.
Without loss of generality, we assume $$\alpha_1  \geq\alpha_2\geq\cdots\geq\alpha_k\geq0.$$
Then, $\alpha_1\ge 1$.
By Proposition 9.1 in \cite{LiZhu1999},
$(G_{p_1}+\cdots+G_{p_k})\widetilde{w}$ can be extended to a $C^2$-function in a neighborhood of $p_j$ if $\alpha_j=0$.

If some of $\alpha_1$, $\cdots$, $\alpha_k$ is zero, we denote by  $l$
the first integer in $\{1, \cdots, k\}$ such that $\alpha_l=0$.  Otherwise, we set $l=k+1$.
We always have $l\geq 2$.

Consider the metric
$$\widehat{g}
 = (G_{p_1}+\cdots+G_{p_{l-1}})^{\frac{4}{n-2}}g\quad\text{on }\widehat{M} =M \setminus\
\{p_1, \cdots, p_{l-1}\}.$$
Set $(G_{p_1}+\cdots+G_{p_k})\widetilde{w}= (G_{p_1}+\cdots+G_{p_{l-1}})\widehat{w}$.
Then, $\widehat{w}$ satisfies\begin{equation*}
L_g\big( (G_{p_1}+\cdots+G_{p_{l-1}})\widehat{w}\big)=0  \quad\text{in  } M \setminus\{p_1,\cdots,p_{l-1}\},\end{equation*}
and
\begin{equation*}\Delta_{\widehat{g}} \widehat{w}=0  \quad\text{in  } \widehat{M}.\end{equation*}
We also have that $ \widehat{w}$ converges to $\alpha_j$ as $x\to p_j$, $j=1, \cdots, l-1$.
By Proposition 9.1 in \cite{LiZhu1999} and the maximum principle, we have,  near the point $p_{l-1}$,
$$ \widehat{w}=\alpha_{l-1}+C_{l-1}r^{n-2}+O(r^{n-1}),$$
for some nonnegative constant $C_{l-1}$.
Then, the proof follows similarly as that of Theorem \ref{blow up-one point}.

\smallskip

Next, we consider Case 3, i.e., $(M,g)$ is conformally equivalent to the standard sphere $S^n$
and $\Gamma$ consists of at least two but only finitely many points.
We can assume $(M,g)=(S^n,g_{S^n})$.  By Lemma \ref{lemme-convergence},
we have, for any $m$,
$$u_i\rightarrow 0\quad\text{in }C^{m}_{\mathrm{loc}}(S^n \setminus \Gamma)
\text{ as }i\rightarrow\infty.$$
Set $v_i=u_i^{-\frac{2}{n-2}}$. Then,
$$v_i \text{ diverges to $\infty$ locally uniformly in $S^n \setminus\Gamma$
 as $i\rightarrow\infty$}.$$
Take two different points $p ,q \in \Gamma$ and let  $\sigma_{pq}$ be
the shorter geodesic connecting $p$ and $q$.
Up to a conformal transform if necessary,
we  assume $|\sigma_{pq}|=2\epsilon$, which is less than $\frac{1}{100n}$,
and $\sigma_{pq}\bigcap\Gamma=\{p,q\}$.
We parametrize $\sigma_{pq}$ by its arc length $t\in[0, 2\epsilon]$,
with $p$ corresponding to  $t=0$ and $q$ to $t=2\epsilon$.


For $i$ large,
let $p_i$ and $q_i$ be the points  parametrized by $\widehat{t}_i$ and $\widetilde{t}_i$,
respectively, where
\begin{align*}\widehat{t}_i&=\min \{t'\in[0, \epsilon] |te_n \in \Omega_i, \text{ for any } t\in ( t',\epsilon]\},\\
\widetilde{t}_i&=\max \{t'\in [\epsilon, 2\epsilon] |t' \in \Omega_i ,\text{ for any } t\in [\epsilon,t')\}.\end{align*}
Then, $p_i, q_i\in \partial\Omega_i$ and
$$p_i\rightarrow p,\quad q_i\rightarrow q.$$
For convenience, we denote by $v_i(t)$ the function $v_i$ restricted to the geodesic $\sigma_{pq}$.
By the polyhomogenous expansion of $v_i$, we have $|\partial_t v_i(p_i)|\leq 1$ and $|\partial_t v_i(q_i)|\leq 1$.

Since $v_i(\epsilon)\to \infty$, for $i$ large,
we take $t_i \in (\widehat{t}_i,\widetilde{t}_i)$ such that, for any $t\in (\widehat{t}_i,\widetilde{t}_i)$,
$$\partial_tv_i(t)\leq \partial_tv_i(t_i).$$
Then,
$$\partial_tv_i(t_i)>\frac{v_i(\epsilon)-0}{\epsilon-\widehat{t}_i}\geq\frac{ v_i(\epsilon)}{\epsilon},$$
and $$\partial_{tt}v_i( t_i)=0.$$
Denote by $R^i_{tt}$ the Ricci curvature of $g_i$
acting on the unit vector $v_i\frac{\partial}{\partial t}$ with respect to the metric $g_i$.
Then, we can verify at the point $t_ie$, $R^i_{tt}$
diverges to $ -\infty$ as $i\rightarrow\infty$.
\end{proof}

\section{General Domains in Euclidean Spaces}\label{sec-Examples}

In this section, we present several examples of smooth bounded domains $\Omega$ in the Euclidean space
and examine whether
the complete conformal metrics $g_\Omega$ associated with the Loewner-Nirenberg problem
have negative Ricci curvatures. We demonstrate by these examples
the complexity of the issue studied in this paper.
Topological conditions are not sufficient to determine whether
these complete conformal metrics have negative Ricci curvatures.

There are two classes of examples. First, we construct nonconvex smooth domains in which
the complete conformal metrics
with a constant scalar curvature still have negative Ricci curvatures.
Second, we construct bounded smooth domains
in which the complete conformal metrics have positive Ricci components at some points.

By the Cartan-Hadamard Theorem, we know that $\pi_{i}(\Omega)=0$ for $i\geq2$ if $g_\Omega$
has negative sectional curvatures in $\Omega$, where $\pi_{i}(\Omega)$ is the $i$-th homotopy group of $\Omega$.
The following example shows that $\pi_{1}(\Omega)=0$ is not necessary for $g_\Omega$
to have negative sectional curvatures.

\begin{example}\label{exa-nonzero-pi1} Set
$$\Omega_r=\Big\{(x_1, \cdots, x_n)|\Big(x_1-\frac{x_1}{\sqrt{x_1^2+x_n^2}}\Big)^2+\sum_{i=2}^{n-1}x_i^2
+ \Big(x_n-\frac{x_n}{\sqrt{x_1^2+x_n^2}}\Big)^2 < r^2 \Big\}\subset\mathbb R^n,$$
where $0<r<{1}/{100}$. Then,  $\pi_{1}(\Omega_r)= \mathbb Z$ and  $\pi_{i}(\Omega)=0$ for $i\geq2$.
We claim that $g_{\Omega_r}$ has negative sectional curvatures in $\Omega_r$, if $r$ is sufficiently small.
By symmetry, it suffices to show $g_{\Omega_r}$ has negative sectional curvatures in
$$\Big\{(x_1, \cdots, x_n)|(x_1-1)^2+\sum_{i=2}^{n-1}x_i^2<r^2, x_n=0 \Big\}\subset\mathbb R^n.$$
Note that
$\Omega_r$ is transformed to
\begin{align*}\widetilde{\Omega}_r
&=\Big\{y\in \mathbb R^n|\Big(y_1+\frac{1}{r}-\frac{ry_1+1}{r\sqrt{(ry_1+1)^2+(ry_n)^2}}\Big)^2\\
&\qquad+\sum_{i=2}^{n-1}y_i^2+ \Big(y_n-\frac{y_n}{\sqrt{(ry_1+1)^2+(ry_n)^2}}\Big)^2 < 1 \Big\},
\end{align*}
under the transform
$$y_1=\frac{x_1-1}{r},\, y_2=\frac{x_2}{r},\,\cdots,\,y_n=\frac{x_n}{r}.$$
Then, $\{ \widetilde{\Omega}_r\}$ converges  in $C^{k}$, for any integer $k\ge 1$, to
$$\Omega_0=\{(x_1, \cdots, x_n)|x_1^2+\cdots+x_{n-1}^2<1 \}\subset\mathbb R^n$$
in any compact sets in $\mathbb R^n$.
Let $u^0$ be the positive solution of \eqref{eq-MainEq}-\eqref{eq-MainBoundary}
for $\Omega=\Omega_0$. Then, $u^0$ has the form
$$u^0(x_1,...,x^n)=u^0(x_1,...,x_{n-1},0).$$
By the same method as in the proof of Theorem \ref{main reslut},
we can prove $(u^0)^{\frac{4}{n-2}}|dx|^2$ has negative sectional curvatures in $\Omega_0$.
Then, the polyhomogeneous expansions for $u_r$ imply that $g_{\widetilde{\Omega}_r}$
has negative sectional curvatures in
$$\{(x_1, \cdots, x_n)|1-\delta<x_1^2+\cdots+x_{n-1}^2<1, x_n=0 \},$$ for some small $\delta>0$,
independent of $r$.
It is straightforward to prove that $g_{\widetilde{\Omega}_r}$ has negative sectional curvatures in
$$\{(x_1, \cdots, x_n)|x_1^2+\cdots+x_{n-1}^2\leq 1-\delta, x_n=0 \},$$
since $\{\widetilde{\Omega}_r \}$ converges to $\Omega_0$
in any compact sets in $\mathbb R^n$.\end{example}

In the next example, we construct
a  bounded smooth domain $\Omega\subset\mathbb R^n$ which is diffeomorphic
to the unit ball and cannot be conformally  transformed to a bounded convex domain such that
the complete conformal metric $g_\Omega$ with a negative scalar curvature possesses
negative sectional curvatures in $\Omega$.

\begin{example}\label{exa-not-conformal-to-convex}
Let $\sigma$ be an $U$-shaped smooth curve in $\mathbb R^n$ with two endpoints $p$ and $q$.
Let $\Omega^{r}$, $0<r<{1}/{100}$,
be a family of tubular domains with smooth boundaries satisfying the following conditions:

(A1) $\Omega^{r_2}\subseteq\Omega^{r_1}$ if $r_2 <r_1$;

(A2) $\bigcap \Omega^{r} =\sigma$;

(A3) For a fixed point $x_0 \in \sigma\setminus \{p,q\}$, $\{x\in \mathbb R^n|rx+x_0\in \Omega^r\}$
converges  in $C^{k}$, for any integer $k\ge 1$,  to a smooth domain which is equal to
$\{(x_1, \cdots, x_n)|x_1^2+\cdots+x_{n-1}^2<1 \}\subset\mathbb R^n$ up to an Euclidean transformation
 in any compact set in $\mathbb R^n$ as $r\rightarrow0$;

(A4) At the point $p$ or $q$, $\{x\in \mathbb R^n|rx+x_0\in \Omega^r\}$ converges
in $C^{k}$, for any integer $k\ge 1$, to a smooth domain which is equal to
$\Omega^0$ up to an Euclidean transformation in any compact set in $\mathbb R^n$ as $r\rightarrow0$,
where $\Omega^0$ is a smooth convex domain which coincides
$\{(x_1, \cdots, x_n)|x_1^2+\cdots+x_{n-1}^2<1 \}$ when $x_n\geq0$, and coincides
$\{(x_1, \cdots, x_n)|-\sqrt{x_1^2+\cdots+x_{n-1}^2}<x_n<0\}$
when $-1<x_n<{1}/{3}$.

Then, $\Omega^{r}$ cannot be conformally transformed to  bounded convex domains if $r$ is sufficiently small.
Otherwise, there would exist an arc $\sigma_r$ with endpoints $p$ and $q$ such that
$\sigma_r \subseteq\Omega_r$.
Let $w$ be the positive solution of \eqref{eq-MainEq}-\eqref{eq-MainBoundary} for $\Omega=\Omega^0$.
Then by approximation and using the same method as in the proof of Theorem \ref{main reslut},
we can prove $w^{-\frac{2}{n-2}}$ is concave in $\Omega^0$ and $w^{\frac{4}{n-2}}|dx|^2$ has
negative sectional curvatures in $\Omega^0$. Arguing as in
Example \ref{exa-nonzero-pi1}, we can show that the sectional curvatures in $\Omega^0$ is close
to the sectional curvatures in $\Omega_0$ when $x_n$ is sufficiently large.
Hence, the sectional curvatures in $\Omega^0$ are bounded above by a negative constant.
Then, arguing as in Example \ref{exa-nonzero-pi1} again, we obtain that $g_{\Omega^r}$ has negative
sectional curvatures in $\Omega^r$ when $r$ is sufficiently small.
\end{example}

\begin{example}\label{negative ricci domain}
For $R>r>0$, consider the annular region $\Omega_{R,r}=B_{R}\setminus B_r$  in $\mathbb R^n$.
Let $g_{\Omega_{R,r}}$ be the complete conformal metric
with the constant scalar curvature $-n(n-1)$ in $\Omega_{R,r}$.
Arguing as in Example \ref{exa-nonzero-pi1},
we can prove that the maximum Ricci curvature component of  $g_{\Omega_{R,r}}$ is close
to $-{n}/{2}$ as $R$ and $r$ tend to 1. Hence, $g_{\Omega_{R,r}}$ has negative
Ricci curvatures in $\Omega_{R,r}$ as $R$ is sufficiently close to $r$.
\end{example}

In the rest of this section, we construct bounded domains
in which the complete conformal metrics have positive Ricci components at some points.
The most straightforward way to do this is to combine Theorem \ref{thrm-large-positive-Ricci}
for the case $(M,g)=(S^n,g_{S^n})$
and the stereographic projections.

We identify $\mathbb{R}^{n}$ in $\mathbb{R}^{n+1}$ as $\mathbb{R}^{n}\times \{0\}$
and write
$x=(x_1, \cdots, x_n)\in\mathbb R^n$.
Then,
$$S^n=\{(x,x_{n+1}):\,|x|^{2}+x_{n+1}^{2}=1\}.$$
Consider the transform  $T: \mathbb R^{n}\to S^n$ given by
$$T(x)=\Big(\frac{2 x}{1+|x|^2},\frac{|x|^2-1}{1+|x|^2}\Big).$$
Then, $T$ is the inverse transform of the stereographic projection
which lifts $\mathbb{R}^{n}\times \{0\}$ to $S^n$.

\begin{prop}\label{prop-general example} Let $\Gamma$ be a set in $S^n$ as in Theorem
\ref{thrm-large-positive-Ricci}, containing the north pole.
Suppose $\widetilde{\Omega}_i$ is a sequence of increasing smooth domains
in $S^n$ which converges to $S^{n}\setminus \Gamma$, with $\partial  \widetilde{\Omega}_i$
not containing the north pole, and set
$\Omega_i=T^{-1}(\widetilde{\Omega}_i)$.
Assume $g_i$ is the complete conformal metric in $\Omega_i$
with the constant scalar curvature $-n(n-1)$. Then,
for sufficiently large $i$, $g_i$
has a positive Ricci curvature component somewhere in
$\Omega_{i}$. Moreover,
the maximal Ricci curvature of $g_i$ in $\Omega_{i}$ diverges to $\infty$ as $i\to\infty$.
\end{prop}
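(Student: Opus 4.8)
The plan is to obtain Proposition~\ref{prop-general example} as a direct consequence of Theorem~\ref{thrm-large-positive-Ricci} applied to the round sphere, transporting every object through the conformal diffeomorphism $T$. Recall that $T$ is a conformal diffeomorphism from $\mathbb R^n$ onto $S^n\setminus\{N\}$, where $N=(0,\dots,0,1)$ is the north pole, and that $T^*g_{S^n}=\bigl(\tfrac{2}{1+|x|^2}\bigr)^2 g_E$. Since $\Gamma$ contains $N$, each $\widetilde\Omega_i$ lies in $S^n\setminus\Gamma\subset S^n\setminus\{N\}$ and, by hypothesis, $\partial\widetilde\Omega_i$ also avoids $N$; hence $\overline{\widetilde\Omega_i}$ is a compact subset of $S^n\setminus\{N\}$, so $\Omega_i=T^{-1}\widetilde\Omega_i$ is a bounded domain in $\mathbb R^n$ with smooth boundary, and $T$ restricts to a conformal diffeomorphism $\Omega_i\to\widetilde\Omega_i$.

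The key step is to check that $T$ identifies $g_i$ with the complete conformal metric of $\widetilde\Omega_i$, now regarded as a domain in $(S^n,g_{S^n})$. Let $\widetilde u_i$ solve \eqref{eq-MEq}--\eqref{eq-MBoundary} on $\widetilde\Omega_i$ with $(M,g)=(S^n,g_{S^n})$, so that $\widetilde g_i=\widetilde u_i^{4/(n-2)}g_{S^n}$ is the complete conformal metric on $\widetilde\Omega_i$ with scalar curvature $-n(n-1)$. Set $\rho=\bigl(\tfrac{2}{1+|x|^2}\bigr)^{(n-2)/2}$, so that $T^*g_{S^n}=\rho^{4/(n-2)}g_E$, and put $\widehat u_i=\widetilde u_i\circ T$ on $\Omega_i$. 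Using naturality of the conformal Laplacian under $T$ together with the covariance identity $L_{g_E}(\rho\,\psi)=\rho^{(n+2)/(n-2)}L_{\rho^{4/(n-2)}g_E}(\psi)$ recalled in Section~\ref{sec-compact-manifolds}, one finds that $w_i:=\rho\,\widehat u_i$ satisfies $\Delta w_i=\tfrac14 n(n-2)\,w_i^{(n+2)/(n-2)}$ in $\Omega_i$, that is, \eqref{eq-MainEq}; moreover $w_i=\infty$ on $\partial\Omega_i$, since $\widehat u_i$ blows up there while $\rho$ is bounded above and below near $\partial\Omega_i$, so \eqref{eq-MainBoundary} holds as well. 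By the uniqueness in the Loewner--Nirenberg problem recalled after \eqref{eq-MEq}--\eqref{eq-MBoundary}, $w_i$ coincides with the solution $u_i$ of \eqref{eq-MainEq}--\eqref{eq-MainBoundary} on $\Omega_i$. Consequently $g_i=u_i^{4/(n-2)}g_E=\widehat u_i^{4/(n-2)}\rho^{4/(n-2)}g_E=\widehat u_i^{4/(n-2)}T^*g_{S^n}=T^*\widetilde g_i$; in other words $T\colon(\Omega_i,g_i)\to(\widetilde\Omega_i,\widetilde g_i)$ is an isometry.

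The conclusion now follows at once. Isometries preserve all curvature quantities, so $g_i$ has a positive Ricci curvature component at a point $x$ if and only if $\widetilde g_i$ has one at $T(x)$, and the maximal Ricci curvature of $g_i$ on $\Omega_i$ equals that of $\widetilde g_i$ on $\widetilde\Omega_i$. By hypothesis $\Gamma$ is a set in $S^n$ as in Theorem~\ref{thrm-large-positive-Ricci}; since $(S^n,g_{S^n})$ is conformally equivalent to the standard sphere, $\Gamma$ falls under Case~1 (if it contains a submanifold of positive dimension) or Case~3 (if it consists of at least two points), while $\widetilde\Omega_i$ is an increasing sequence of smooth domains converging to $S^n\setminus\Gamma$ whose associated complete conformal metrics are the $\widetilde g_i$. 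Hence Theorem~\ref{thrm-large-positive-Ricci} applies to $\widetilde\Omega_i$, and transporting its conclusion back through $T$ yields precisely the assertion for $g_i$.

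The only point that is not purely formal is the conformal naturality used in the second paragraph, namely that the Loewner--Nirenberg metrics of $\Omega_i\subset(\mathbb R^n,g_E)$ and of $\widetilde\Omega_i\subset(S^n,g_{S^n})$ correspond under $T$; but this is a routine consequence of the conformal covariance of the conformal Laplacian and the uniqueness of the solution of \eqref{eq-MEq}--\eqref{eq-MBoundary}, the only subtlety being that $\rho$ stays bounded above and below near $\partial\Omega_i$ so that the infinite boundary value is preserved. Everything else reduces to standard properties of the stereographic projection.
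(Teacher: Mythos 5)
Your proposal is correct and follows essentially the same route as the paper: the paper simply asserts that Proposition~\ref{prop-general example} ``follows easily from Theorem~\ref{thrm-large-positive-Ricci} for the case $(M,g)=(S^n,g_{S^n})$'' via stereographic projection, and your argument is precisely the natural elaboration of that remark — identifying the two Loewner--Nirenberg metrics through $T$ using the conformal covariance of $L_g$ and uniqueness, then transporting the curvature conclusion. The detail you flag (that $\rho$ stays bounded above and below near $\partial\Omega_i$, so the infinite boundary condition carries over) is the right thing to check, and your reduction of $\Gamma$ to Cases 1 and 3 of the theorem is exactly the intended use.
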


Proposition \ref{prop-general example} follows easily from Theorem \ref{thrm-large-positive-Ricci}
for the case $(M,g)=(S^n,g_{S^n})$.

We point out that notations in Proposition \ref{prop-general example} is slightly different from
those in Theorem \ref{thrm-large-positive-Ricci}. In Proposition \ref{prop-general example},
$\widetilde {\Omega}_i$ is a domain in $S_n$ and $\Omega_i$ is a domain in $\mathbb R^n$.
We also note that $\Omega_i$ is a bounded domain in $\mathbb R^n$
if the north pole is not in the closure of $\widetilde {\Omega}_i$.

\begin{example}\label{exa-isolated-points}
Let $\{p_1,\cdots,p_k \}$ be a collection of finitely many points in $\mathbb R^n$,
with $k\ge 1$,
and set $\Omega_{R,r}=B_R(0)\backslash \bigcup_{i=1}^{k} B_r(p_i)$.
Then, for $R$ sufficiently large and $r$ sufficiently small,  the complete conformal metric  in $\Omega_{R,r}$
with the constant scalar curvature $-n(n-1)$
has a positive Ricci curvature component somewhere.
Note that the corresponding $\Gamma$ in $S^n$ is
given by $\Gamma=\{e_{n+1}, T(p_1), \cdots, T(p_k)\}$,
which consists of at least two points. If $k=1$ and $p_1=0$, then $\Omega_{R, r}$
is the annular region as in Example \ref{negative ricci domain}.
Combining with  Example \ref{negative ricci domain} for a fixed constant $r$,
we conclude that
the maximum Ricci curvature component of $g_{\Omega_{R,r}}$ tends to $-{n}/{2}$
as $R\to r$ and tends to $\infty$ as $R\to \infty$.
\end{example}


Next, we construct bounded star-shaped domains in which
the complete conformal metrics have positive Ricci components somewhere.

\begin{example}\label{exa-curves}
For $n\ge 4$, set
$$\gamma=\{(0, \cdots, 0, x_n)|\, |x_n|\ge 1\}\subset\mathbb R^n.$$
Let $\Omega_i$ be a sequence of increasing bounded smooth domains in $\mathbb R^n$,
star-shaped with respect to the origin,  which converges to
$\mathbb R^n\setminus \gamma$. Then, for $i$ sufficiently large,
the complete conformal metric in $\Omega_i$
with the constant scalar curvature $-n(n-1)$
has a positive Ricci curvature component somewhere.
Note that the corresponding $\Gamma$ in $S^n$ is
given by the equator in the $x_n$-$x_{n+1}$ plane minus the image under $T$
of the segment $(-1,1)$ on $x_n$-axis. Hence, the dimension of $\Gamma$ is 1.
This is the reason we require $n\ge 4$. We point out that domains in this example are diffeomorphic to balls.
\end{example}

\end{document}